\date{\today}
\newtheorem{thm}{Theorem}[section]
\newtheorem{cor}[thm]{Corollary}
\newtheorem{lem}[thm]{Lemma}
\let\oldproofname=\proofname
\renewcommand{\proofname}{\rm\bf{\oldproofname}}
 \newtheorem{prop}[thm]{Proposition}
\theoremstyle{definition}
 \newtheorem{rmk}[thm]{Remark}
 \newtheorem{defn}{Definition}[section]
 \newtheorem{claim}[thm]{Claim}
 \newtheorem{conj}{Conjecture}[section]
\newcommand{\Z}{\mathbb{Z}}
\newcommand{\A}{\mathcal{A}}
\newcommand{\N}{\mathcal{N}}
\newcommand{\M}{\mathcal{M}}
\def\Hom{\text{Hom}}
\begin{document}
\title{Neighborhood Complexes of Some Exponential Graphs}
\author{ Nandini Nilakantan\footnote{Department of Mathematics and Statistics, IIT Kanpur, Kanpur-208016, India. nandini@iitk.ac.in.},  Samir Shukla\footnote{{Department of Mathematics and Statistics, IIT Kanpur, Kanpur-208016, India. samirs@iitk.ac.in.}}}
\maketitle
\begin{abstract}
In this article, we consider the bipartite graphs $K_2 \times K_n$. We first show that the connectedness of  $\displaystyle \N(K_{n+1}^{K_n}) =0$.
 Further, we show that  $\text{Hom}(K_2 \times K_{n}, K_{m})$ is homotopic to $S^{m-2}$, if $2\leq m <n$.
\end{abstract}

 {\bf Keywords} : Hom complexes, Exponential graphs, Discrete Morse theory.

\vspace{.1in}

\hrule

\section{Introduction}

Determining the chromatic number of a graph is a classical problem
in graph theory and finds applications in several fields. The
Kneser conjecture posed in 1955 and solved by Lov{\'a}sz \cite{l} in
1978, dealt with the problem of computing the chromatic number of a
certain class of graphs, now called the {\it Kneser graphs}. To prove this
conjecture, Lov{\'a}sz first constructed the
neighborhood complex $\mathcal{N}(G)$  of a graph $G$, which is a
simplicial complex and then related the connectivity of this complex
to the chromatic number of $G$.

 Lov{\'a}sz  introduced the notion of a simplicial complex called the {\it  Hom complex}, denoted by $\Hom(G,H)$ for graphs $G$ and $H$,
  which generalized the notion of a neighborhood complex. In particular, $\Hom(K_2,G)$ (where $K_2$ denotes a complete graph with $2$ vertices)
   and $\N(G)$ are homotopy equivalent. The idea was to be able to estimate the chromatic number of an arbitrary graph $G$ by understanding the connectivity of
   the Hom complex from some standard graph into $G$. Taking $H$ to be the complete graph $K_{n}$  makes each of the complexes $\text{Hom} (G, K_n)$ highly connected.
     In \cite{BK} Babson and Kozlov made the following conjecture.

\begin{conj}
For a graph $G$ with maximal degree $d$, $\text{Hom}(G,K_n)$ is  atleast $n-d-2$ connected.

\end{conj}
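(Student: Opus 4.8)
The statement is the Babson--Kozlov conjecture, and since discrete Morse theory is the natural tool (and the one advertised in the keywords), the plan is to prove it by building an explicit acyclic matching on the face poset of $\Hom(G,K_n)$ and controlling the dimensions of the surviving critical cells. First I would record the combinatorial cell structure: the cells of $\Hom(G,K_n)$ are the \emph{multihomomorphisms} $\eta\colon V(G)\to 2^{[n]}\setminus\{\emptyset\}$, where $[n]=\{1,\dots,n\}$, subject to $\eta(u)\cap\eta(v)=\emptyset$ for every $(u,v)\in E(G)$; these are partially ordered by $\eta\le\eta'\iff\eta(v)\subseteq\eta'(v)$ for all $v$, with $\dim\eta=\sum_{v}(|\eta(v)|-1)$, and the $0$-cells are exactly the proper $n$-colorings of $G$. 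The arithmetic behind the bound $n-d-2$ is already visible at these $0$-cells: if $c$ is a proper coloring, then at each vertex $v$ the forbidden colors are $\{c(u):u\sim v\}$, of which there are at most $d$, so $v$ has at least $n-d$ free colors available. This slack is what must be converted into connectivity.

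The matching I would construct exploits exactly this slack. Fix a linear order on $V(G)$ and on $[n]$. Given a cell $\eta$, scan the vertices in order and, at each vertex $v$, examine the colors that are \emph{free} for $v$, that is, those lying in $[n]\setminus\bigcup_{u\sim v}\eta(u)$; at the first vertex admitting a relevant free color, toggle the smallest such color in or out of $\eta(v)$, matching $\eta$ with the resulting cell of adjacent dimension. One then has to verify the two defining properties of a Morse matching: (i) the matching is \emph{acyclic}, which I would try to establish by organizing it as a composition of elementary matchings on the poset fibers of the forgetful maps $\Hom(G,K_n)\to\Hom(G-v,K_n)$ and invoking a patchwork/cluster argument so that acyclicity is inherited fiberwise; and (ii) every critical cell other than the basepoint has dimension at least $n-d-1$. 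Property (ii) is where the degree hypothesis re-enters: a cell can escape the toggle only when, at every vertex, the free colors have been exhausted in a forced way, and because each vertex always retains $\ge n-d$ free colors such a cell must already carry $\ge n-d-1$ surplus colors, i.e. sit in dimension $\ge n-d-1$.

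The hard part will be item (i), the acyclicity, together with making the fiberwise description genuinely global. The forgetful map to $\Hom(G-v,K_n)$ has fibers that are simplices on the available-color sets, so Quillen's fiber lemma gives a homotopy equivalence with a subcomplex of $\Hom(G-v,K_n)$, but that subcomplex (where the neighbors of $v$ fail to exhaust $[n]$) is \emph{not} itself of the form $\Hom(\text{smaller graph},K_n)$, so a naive induction on $|V(G)|$ stalls; reconciling the local toggles across different choices of $v$ so that no directed cycle appears in the matching digraph is the genuine technical obstacle. Assuming (i) and (ii) are in hand, discrete Morse theory yields a CW model of $\Hom(G,K_n)$ with a single $0$-cell and all remaining cells in dimension $\ge n-d-1$, whence $\Hom(G,K_n)$ is $(n-d-2)$-connected. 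Finally I would check tightness on $G=K_{d+1}$, where $\Hom(K_{d+1},K_n)$ is exactly $(n-d-2)$-connected, confirming that the bound cannot be improved in terms of the maximal degree alone.
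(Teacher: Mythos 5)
The statement you are proving is not actually proved in this paper: it is stated as Conjecture~1.1, attributed to Babson and Kozlov, and the authors simply cite \v{C}uki\'c--Kozlov \cite{CK} for its proof. So there is no in-paper argument to compare against; your proposal has to stand on its own, and as written it does not. It is a proof \emph{plan} whose two load-bearing claims --- (i) acyclicity of the matching and (ii) the lower bound $n-d-1$ on the dimension of every non-basepoint critical cell --- are explicitly assumed rather than established (``Assuming (i) and (ii) are in hand\dots''). You yourself flag (i) as ``the genuine technical obstacle,'' and the fiberwise/patchwork route you sketch stalls for exactly the reason you name: the relevant subposet of $\Hom(G-v,K_n)$ is not itself a Hom complex, so the induction has no usable hypothesis. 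Nothing in the proposal resolves this.

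There is also a concrete error in the heuristic supporting (ii). You argue that ``each vertex always retains $\ge n-d$ free colors,'' but that count is only valid at $0$-cells, where every $|\eta(u)|=1$. For a general cell $\eta$ the neighbors of $v$ occupy up to $\sum_{u\sim v}|\eta(u)| \le d + \dim\eta$ colors, so the number of free colors at $v$ is only bounded below by $n-d-\dim\eta$. This weaker bound does tell you that every cell of dimension $\le n-d-1$ has a free color at every vertex, which is suggestive, but it does not by itself show that such a cell is matched: your rule ``toggle the smallest free color in or out'' is not even a well-defined involution until you specify \emph{when} to add and when to remove, and the naive specifications (always add if absent, else remove) notoriously produce directed cycles in the matching digraph. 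Converting the color slack into an acyclic matching with all critical cells in dimension $\ge n-d-1$ is precisely the content of the \v{C}uki\'c--Kozlov theorem, and it is the part your write-up leaves blank. The tightness check on $K_{d+1}$ at the end is fine but irrelevant to the claimed lower bound.
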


In \cite{CK}, {\v C}uki{\' c} and Kozlov presented a proof for the above conjecture. 
They further showed that in the case when $G$ is an odd cycle,  $\text{Hom}(G,K_n)$ is $n-4$ connected for all $n \geq 3.$
From \cite{dk1}, it is seen that for any even cycle $C_{2m}$,  $\text{Hom}(C_{2m},K_n)$ is $n-4$ connected for all $n \geq 3.$

It is natural to ask whether it is possible to classify the class of graphs $G$ for which the Hom complexes $\text{Hom}(G,K_n)$
 are exactly $n-d-2$ connected. In this article, we consider the bipartite graphs $K_2\times K_n$, which are $n-1$ regular graphs.
  Since $\displaystyle \text{Hom}(K_{2}\times K_{n}, K_m) \simeq \text{Hom}(K_{2}, K_m^{K_{n}})$,
   it is sufficient to determine the connectedness of $\text{Hom}(K_{2}, K_m^{K_{n}})$ which is the same as the connectedness of $\N (K_{m}^{K_n})$.
   The main results of this article are

 \begin{thm} \label{thm1}

 Let $n \geq 4$ and $p$ = $\frac{n! (n-1) n} {2}$. Then  

   \begin{center}
            $H_k(\mathcal{N}(K_{n+1}^{K_n}); \Z_2) = \begin{cases}
            \ \mathbb{Z}_2  & \text{if \hspace{0.3 cm}$k = 0, 1$ or $n-1$},\\
            \ \mathbb{Z}_2^{p - n! +1} & \text{ if \hspace{0.3 cm}$k = 2$},\\
               \              0 & \text{otherwise}.\\
                       \end{cases}$
  \end{center}
  \end{thm}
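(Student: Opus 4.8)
The plan is to realize $\N(K_{n+1}^{K_n})$ explicitly as a simplicial complex and then run discrete Morse theory on its face poset. First I would fix the combinatorial model: the vertices are the functions $f\colon [n]\to[n+1]$ (writing $[k]=\{1,\dots,k\}$), and a finite set $\{f_1,\dots,f_k\}$ of vertices is a face precisely when the functions admit a common neighbour, i.e. when for every coordinate $i\in[n]$ the set $\bigcup_{\ell} f_\ell([n]\setminus\{i\})$ is a proper subset of $[n+1]$ (equivalently, for each $i$ there is a free colour $v_i$ used by no $f_\ell$ outside coordinate $i$). In this description the neighbourhood $N(g)=\{f : f(i)\notin g([n]\setminus\{i\})\ \forall i\}$ of a vertex $g$ is a combinatorial box $\prod_i\bigl([n+1]\setminus g([n]\setminus\{i\})\bigr)$, and the maximal faces of $\N(K_{n+1}^{K_n})$ are exactly the maximal such boxes. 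This box description is what makes an explicit matching tractable.

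Next I would build an acyclic matching on the face poset, using the distinguished colour $n+1$ (the extra value that $[n+1]$ has over $[n]$) as pivot. Concretely, I would order the vertices and define an element matching that pairs a face $\sigma$ with $\sigma$ together with (or minus) a canonically chosen vertex, so that almost every face is cancelled; acyclicity I would obtain from the standard fact that such a ``first free colour / first coordinate'' rule induces a poset map with acyclic fibres (Patchwork/Cluster Lemma). The design goal is that the surviving critical faces be organised by dimension as follows: one critical $0$-cell (a basepoint), $n!$ critical $1$-cells, $p$ critical $2$-cells, none in dimensions $3,\dots,n-2$, and a single critical $(n-1)$-cell. The appearance of $n!$ signals that the critical low cells are governed by the bijections $[n]\to[n]$ (the injective functions avoiding the value $n+1$), while the lone top cell is the surviving top cell of the ``expected'' sphere $S^{n-1}$, matching the $S^{m-2}$ behaviour of $\Hom(K_2\times K_n,K_m)$ from the regime $m\le n$ pushed to $m=n+1$.

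With the critical cells in hand, the homology follows from the Morse chain complex over $\Z_2$. Since there is a single critical $0$-cell, every critical edge is a loop in the Morse complex, so $\partial_1=0$ and $H_0=\Z_2$. The only nonzero Morse differential is $\partial_2\colon \Z_2^{p}\to\Z_2^{n!}$, and I would show it has rank exactly $n!-1$; this simultaneously forces $H_1=\ker\partial_1/\operatorname{im}\partial_2=\Z_2^{n!}/\Z_2^{n!-1}=\Z_2$ and $H_2=\ker\partial_2=\Z_2^{p-(n!-1)}=\Z_2^{p-n!+1}$. Because there are no critical cells in dimensions $3,\dots,n-2$ and a single one in dimension $n-1$ with vanishing adjacent differentials, $H_{n-1}=\Z_2$ and the remaining groups vanish. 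A consistency check is the Euler characteristic: the critical-cell count gives $1-n!+p+(-1)^{n-1}$, which agrees with $b_0-b_1+b_2+(-1)^{n-1}b_{n-1}$.

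The main obstacle is twofold and lies entirely in the middle of this program. First, proving that the explicit matching is acyclic and, crucially, that it really leaves the stated numbers of critical cells in each dimension: the count of $p=\tfrac{n!(n-1)n}{2}$ critical $2$-cells is a delicate bookkeeping over choices of (bijection, coordinate-pair)-type data, and one must verify that dimensions $3\le k\le n-2$ are completely cancelled, which is where the specific combinatorics of the boxes $N(g)$ with repeated colours has to be controlled. Second, determining $\operatorname{rank}\partial_2=n!-1$ exactly — not merely bounding it — is the step that pins down $H_1=\Z_2$ rather than something larger; this requires understanding precisely which critical $2$-cells cancel which critical $1$-cells in the Morse complex, i.e. a description of the incidence numbers mod $2$ between the critical $2$- and $1$-cells.
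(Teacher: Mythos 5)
Your outline matches the paper's strategy -- discrete Morse theory on the face poset, with exactly the critical-cell census $c_0=1$, $c_1=n!$, $c_2=p$, $c_{n-1}=1$ and the homology then read off from $\operatorname{rank}\partial_2=n!-1$ -- but the two items you flag as ``the main obstacle'' are not side issues: they are the entire content of the proof, and your proposal leaves both unresolved. First, no matching is actually constructed. The paper makes this tractable by a preliminary reduction you skip: it folds $K_{n+1}^{K_n}$ onto the subgraph $G$ whose vertices are only the constant and the injective maps (using $N(f)\subset N(\tilde f)$ for any non-injective, non-constant $f$), so that $N(f)$ for injective $f$ is just the $(n+2)$-element set $\{f,\langle x\rangle,f_1,\dots,f_n\}$. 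It then performs explicit simplicial collapses (identifying pairs such as $\{f_s,f_t\}$ as free faces of $N(f)$) down to a small complex $\Delta$ whose maximal faces have dimension $2$ or $n-1$, and only then defines the matching, with the constant map $\langle 1\rangle$ -- not the colour $n+1$ -- as the pivot, plus a second, hand-tailored matching $\mu_2$ on the surviving $1$-cells. Working with all $(n+1)^n$ set maps and the ``box'' description of $N(g)$, as you propose, leaves you facing a far larger poset with no stated rule for the matching, no acyclicity argument, and no way to verify that dimensions $3,\dots,n-2$ are completely cancelled. Asserting the target critical-cell counts is not evidence that a matching achieving them exists.

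Second, and more seriously, the computation $\operatorname{rank}\partial_2=n!-1$ requires a specific combinatorial idea that is absent from your proposal. The paper shows (by enumerating all alternating paths out of a critical $2$-cell $\{f,f_i,\langle x\rangle\}$) that each critical $2$-cell is incident mod $2$ to exactly two critical $1$-cells, and that the two corresponding permutation-words in $W_n$ differ by a transposition; conversely each transposition-related pair is joined by some critical $2$-cell. The mod-$2$ incidence matrix is therefore the vertex--edge incidence matrix of a connected graph on the $n!$ words (connectivity is proved by exhibiting an ordering $\alpha_1\sim\alpha_2\sim\cdots\sim\alpha_{n!}$ with consecutive terms differing by a transposition), and such a matrix has rank exactly $n!-1$ over $\mathbb{Z}_2$. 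Without this identification -- or some substitute for it -- you cannot rule out $\operatorname{rank}\partial_2<n!-1$, which would give $H_1$ strictly larger than $\mathbb{Z}_2$ and a different $H_2$. As it stands the proposal is a correct road map of the paper's argument with the two load-bearing steps marked ``to be done.''
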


\begin{thm}\label{thm2}

 conn$(\mathcal{N}(K_{n+1}^{K_n}))$ = 0 for all $n \geq 2$.
\end{thm}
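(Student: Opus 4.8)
The plan is to read $\mathrm{conn}=0$ as the conjunction of two facts: the complex is path-connected (so $\mathrm{conn}\geq 0$) and it is not simply connected (so $\mathrm{conn}\leq 0$). Throughout I use the homotopy equivalences $\N(K_{n+1}^{K_n})\simeq\Hom(K_2,K_{n+1}^{K_n})\simeq\Hom(K_2\times K_n,K_{n+1})$ already recorded in the Introduction, together with the elementary observation that $K_2\times K_n$ is the complete bipartite graph $K_{n,n}$ with a perfect matching deleted. In particular $K_2\times K_2\cong 2K_2$ (two disjoint edges) and $K_2\times K_3\cong C_6$, which is what makes the small cases concrete.

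For $n\geq 4$ the statement is immediate from Theorem \ref{thm1}. Indeed $H_0(\N(K_{n+1}^{K_n});\Z_2)=\Z_2$ forces a single connected component, so $\mathrm{conn}\geq 0$. Moreover $H_1(\N(K_{n+1}^{K_n});\Z_2)=\Z_2\neq 0$; since $H_0$ is torsion-free, the Universal Coefficient Theorem gives $H_1(\N(K_{n+1}^{K_n});\Z)\otimes\Z_2=\Z_2\neq 0$, hence $H_1(\N(K_{n+1}^{K_n});\Z)\neq 0$. As $H_1$ is the abelianization of $\pi_1$, the fundamental group is nontrivial, the complex is not simply connected, and so $\mathrm{conn}\leq 0$. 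Combining the two bounds yields $\mathrm{conn}=0$ for every $n\geq 4$.

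It remains to treat $n=2$ and $n=3$, which fall outside the scope of Theorem \ref{thm1} (its homology formula degenerates precisely there, since the degrees $k=2$ and $k=n-1$ collide once $n\leq 3$). For $n=2$, using that $\Hom$ out of a disjoint union factors as a product, $\Hom(K_2\times K_2,K_3)\cong\Hom(2K_2,K_3)\cong\Hom(K_2,K_3)\times\Hom(K_2,K_3)\simeq S^1\times S^1$, the torus $T^2$; this is connected with $\pi_1\cong\Z^2\neq 0$, so $\mathrm{conn}=0$. For $n=3$, $\Hom(K_2\times K_3,K_4)\cong\Hom(C_6,K_4)$, which is connected by the even-cycle bound recalled from \cite{dk1} (the target index gives $4-4=0$). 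To obtain $\mathrm{conn}\leq 0$ I would show that $\Hom(C_6,K_4)$ is not simply connected, for instance by computing $H_1(\Hom(C_6,K_4);\Z_2)\neq 0$ either through the known homotopy type of $\Hom(C_{2m},K_q)$ (a wedge of spheres whose lowest-dimensional cell sits in dimension $q-3$, equal to $1$ here since $q=4$) or, more self-containedly, by running the discrete Morse matching used for Theorem \ref{thm1} on this small complex and exhibiting a surviving critical cell in dimension $1$.

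The main obstacle is exactly the $n=3$ case: because Theorem \ref{thm1} is stated only for $n\geq 4$, one cannot simply quote it, and one must certify by hand that $\Hom(C_6,K_4)$ carries nontrivial first homology rather than being simply connected. The torus identification disposes of $n=2$ cleanly and the homological input of Theorem \ref{thm1} disposes of all $n\geq 4$ uniformly, so the entire difficulty is concentrated in verifying a single nonvanishing $H_1$ for the exponential graph of the $6$-cycle.
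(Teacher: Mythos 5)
Your overall architecture is exactly the paper's: path-connectedness gives $\mathrm{conn}\geq 0$; for the upper bound one shows $\pi_1\neq 0$ by passing to $H_1(\,\cdot\,;\Z)$ via abelianization and then to $H_1(\,\cdot\,;\Z_2)$ via the universal coefficient theorem (the Tor term vanishing because $H_0(\,\cdot\,;\Z)\cong\Z$); the case $n=2$ is disposed of by $\Hom(K_2\times K_2,K_3)\simeq\Hom(K_2,K_3)\times\Hom(K_2,K_3)\simeq S^1\times S^1$. All of that matches the paper's proof step for step, and your $n\geq 4$ case is complete.

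The genuine gap is $n=3$, and you have correctly located it but not closed it. The paper closes it with Corollary \ref{cor}: it reruns the discrete Morse complex of Theorem \ref{thm1} for $K_4^{K_3}$ (where $|C_1|=6$, $|C_2|=18$, plus the critical $2$-cell from $C_3$), checks $\partial_1=0$ and $\mathrm{rank}\,\partial_2=5$ via Lemma \ref{lemma9}, and concludes $H_1(\mathcal{N}(K_4^{K_3});\Z_2)\cong\Z_2$. Your route (b) is precisely this computation, but deferred rather than performed; note that merely ``exhibiting a surviving critical cell in dimension $1$'' is not enough --- there are six critical $1$-cells and one must bound the rank of $\partial_2$ to see that they do not all die, which is what Lemma \ref{lemma9} supplies. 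Your route (a) is weaker than you suggest: the only fact about even cycles recorded in this paper (quoting \cite{dk1}) is that $\Hom(C_{2m},K_q)$ is at least $(q-4)$-connected, which for $q=4$ gives connectedness but says nothing about $\pi_1$ or $H_1$; to use the literature you would need the exact statement that $\Hom(C_6,K_4)$ has a nontrivial $1$-dimensional wedge summand (equivalently $H_1\neq 0$), which is a strictly stronger input and must be cited precisely. As written, the $n=3$ case of your argument is a plan, not a proof.
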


\begin{cor}\label{cor1}

 Let $n \geq 2$ and $2 \leq m \leq n+1$. Then
 \begin{center}
   $\text{conn}(\text{Hom}(K_2 \times K_n, K_m)) = \begin{cases}
            \ 0 & \text{if \hspace{0.3 cm}$m=n+1$},\\
            \ -1 & \text{if \hspace{0.3 cm} $m = n$},\\
                             m-3 & \text{otherwise}.\\
                       \end{cases}$
 \end{center}

\end{cor}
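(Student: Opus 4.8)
The plan is to reduce everything to the neighborhood complexes $\N(K_m^{K_n})$ and then treat the three ranges of $m$ separately. By the exponential adjunction for $\Hom$-complexes together with Lovász's equivalence $\Hom(K_2,L)\simeq\N(L)$, both recalled in the introduction, one has
$$\Hom(K_2\times K_n,K_m)\ \simeq\ \Hom(K_2,K_m^{K_n})\ \simeq\ \N(K_m^{K_n}),$$
so it suffices to compute $\text{conn}(\N(K_m^{K_n}))$ in each case. For $m=n+1$ this is exactly the content of Theorem \ref{thm2}, giving $\text{conn}=0$. For $2\le m\le n-1$ (the ``otherwise'' range, which is nonempty only when $n\ge 3$) I would invoke the homotopy equivalence $\Hom(K_2\times K_n,K_m)\simeq S^{m-2}$ valid for $m<n$; since $S^{d}$ is $(d-1)$-connected but not $d$-connected, $\text{conn}(S^{m-2})=m-3$, as claimed (for $m=2$ this reads $S^0$, nonempty and disconnected, $\text{conn}=-1$). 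Thus two of the three cases are immediate from results already in hand.

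The remaining and genuinely new case is $m=n$, where I must show $\text{conn}(\N(K_n^{K_n}))=-1$, i.e. that the complex is nonempty but disconnected. Nonemptiness is clear: the two constant maps $c_1,c_2\colon[n]\to[n]$ satisfy $c_1(x)\neq c_2(y)$ for all $x\neq y$, hence are adjacent in $K_n^{K_n}$; in particular this graph has an edge, so $\N(K_n^{K_n})$ is nonempty. For disconnectedness I would isolate the contribution of the bijections. The key computation is a rigidity statement: if $\sigma\colon[n]\to[n]$ is a permutation and $f\sim\sigma$ in $K_n^{K_n}$, then for every $x$ the value $f(x)$ must avoid $\sigma([n]\setminus\{x\})=[n]\setminus\{\sigma(x)\}$, forcing $f(x)=\sigma(x)$; hence $f=\sigma$. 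Therefore the only neighbor of $\sigma$ is $\sigma$ itself (its loop), and consequently $\sigma$ is an isolated vertex of $\N(K_n^{K_n})$: any $\N$-edge through $\sigma$ would require a common neighbour $h$ with $h\sim\sigma$, forcing $h=\sigma$ and hence the other endpoint equal to $\sigma$ as well.

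Since there are $n!\ge 2$ permutations for $n\ge 2$, this already exhibits at least two connected components ($n!$ isolated points together with whatever component(s) carry the constant maps), so $\N(K_n^{K_n})$ is disconnected and $\text{conn}=-1$. The main obstacle I anticipate is the bookkeeping around the looped vertices: one must ensure that the convention for $\N$ (equivalently, the cell structure of $\Hom(K_2,K_n^{K_n})$ under the adjunction) does record the loop at each permutation, so that these bijections genuinely survive as isolated points; if loops were instead deleted, disconnectedness would have to be recovered by a direct and considerably more delicate analysis of the non-surjective maps. To sidestep this I would run the argument on $\Hom(K_2,K_n^{K_n})$ directly, where the $0$-cell $(\{\sigma\},\{\sigma\})$ arising from the loop is visibly an isolated point by the same rigidity computation, confirming $\text{conn}=-1$ independently of the loop convention for $\N$.
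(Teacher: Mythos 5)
Your reduction to $\N(K_m^{K_n})$, your use of Theorem \ref{thm2} for $m=n+1$, and your treatment of $m=n$ via the rigidity of bijections all coincide with the paper's own proof: the paper likewise observes that any $f$ with $\text{Im}\,f=[n]$ satisfies $N(f)=\{f\}$, so the permutations are isolated vertices and $\N(K_n^{K_n})$ is disconnected. Your extra care about the loop at a permutation $\sigma$ is warranted and correctly resolved --- under the paper's definition of $\N$, the singleton $\{\sigma\}$ is a simplex because $\sigma$ is its own common neighbour, so each bijection genuinely survives as an isolated point, and $n!\geq 2$ of them give disconnectedness.

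The one genuine gap is the case $2\le m\le n-1$. You ``invoke'' the homotopy equivalence $\Hom(K_2\times K_n,K_m)\simeq S^{m-2}$ as a result ``already in hand'', but in the paper this equivalence is not established anywhere prior to the corollary: it is one of the two results announced in the abstract and its only proof is the argument inside this very corollary, so citing it here is circular. The missing argument is short but must be given: for $m<n$ there are no injections $[n]\to[m]$, so Lemma \ref{lemma2} folds $K_m^{K_n}$ onto the subgraph $G$ consisting of the $m$ constant maps $<x>$, $x\in[m]$; since $N(<x>)=\{<y> \mid y\neq x\}$, the complex $\N(G)$ is the boundary of the $(m-1)$-simplex, i.e.\ $S^{m-2}$, whence $\text{conn}=m-3$ (consistently with your remark that $m=2$ yields $S^0$ and $\text{conn}=-1$). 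With that paragraph supplied, your proof is complete and follows the paper's route in all three cases.
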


We make the following conjecture.

\begin{conj}
The lower bounds given in \cite{CK} are exact for all bipartite graphs of the type $K_2\times K_n$.
\end{conj}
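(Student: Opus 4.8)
The plan is to push everything through the homotopy equivalences $\Hom(K_2\times K_n,K_m)\simeq\Hom(K_2,K_{m}^{K_{n}})\simeq\N(K_{m}^{K_{n}})$ recalled in the introduction, so that the statement becomes a computation of $\text{conn}(\N(K_{m}^{K_{n}}))$ in each of the three ranges of $m$. I will use the normalisation in which a non-empty space $X$ has $\text{conn}(X)\ge k$ iff $\pi_i(X)=0$ for all $-1\le i\le k$; thus $\text{conn}(X)=-1$ means exactly that $X$ is non-empty but not path-connected, and $\text{conn}(S^d)=d-1$.

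The two outer ranges are read off from results already available. For $m=n+1$, Theorem \ref{thm2} gives $\text{conn}(\N(K_{n+1}^{K_n}))=0$ directly, hence the value $0$. For $2\le m<n$ (which forces $n\ge 3$), the homotopy equivalence $\Hom(K_2\times K_n,K_m)\simeq S^{m-2}$ proved in this article gives $\text{conn}=\text{conn}(S^{m-2})=m-3$; as a check, this returns $-1$ at $m=2$ (where $S^0$ is two points) and $0$ at $m=3$.

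The case $m=n$ is the one needing a genuinely separate argument, and it is the natural place where these two regimes meet. I would show that $\N(K_{n}^{K_{n}})$ is non-empty but disconnected by exhibiting many isolated vertices. The key observation is that every bijection $f\colon[n]\to[n]$ is isolated: if $g\sim f$ in $K_{n}^{K_{n}}$ then $g(i)\ne f(j)$ for all $i\ne j$, so fixing $i$ and letting $j$ range over $[n]\setminus\{i\}$ forces $g(i)\notin\{f(j):j\ne i\}=[n]\setminus\{f(i)\}$, i.e.\ $g(i)=f(i)$ for every $i$, whence $g=f$. Thus $N(f)=\{f\}$, so the only neighborhood containing $f$ is $N(f)$ itself and $f$ lies in no edge of $\N(K_{n}^{K_{n}})$. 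Since there are $n!\ge 2$ such bijections for $n\ge 2$, the complex has at least two components and is non-empty, giving $\text{conn}(\Hom(K_2\times K_n,K_n))=-1$. I expect this to be the only real difficulty: recognising that at $m=n$ the bijections decouple into isolated vertices, whereas for $m<n$ these same maps (together with the non-injective ones) assemble into a sphere and at $m=n+1$ the extra colour reconnects the complex, as quantified by Theorems \ref{thm1} and \ref{thm2}. All the analytically hard content is confined to the cited theorems and the sphere computation, so beyond the isolated-vertex observation the proof is bookkeeping, with some care in converting the sphere case into a connectivity value.
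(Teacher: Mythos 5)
The statement you were asked to prove is the paper's closing Conjecture of Section~1, which the authors explicitly leave open: the paper contains no proof of it, only supporting evidence. What your argument actually establishes is Corollary~\ref{cor1}, the computation of $\text{conn}(\text{Hom}(K_2\times K_n,K_m))$ for $2\le m\le n+1$, and it does so by essentially the same route as the paper: quoting Theorem~\ref{thm2} for $m=n+1$; observing for $m=n$ that every bijection $f\in V(K_n^{K_n})$ has $N(f)=\{f\}$ and is therefore an isolated vertex of the neighborhood complex; and for $m<n$ folding $K_m^{K_n}$ onto the constant maps (a copy of $K_m$, whose neighborhood complex is the boundary of an $(m-1)$-simplex, hence $S^{m-2}$). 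That part is correct, but it is not the conjecture.

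The {\v C}uki{\'c}--Kozlov lower bound for $G=K_2\times K_n$ (maximal degree $d=n-1$) reads $\text{conn}(\text{Hom}(K_2\times K_n,K_m))\ge m-n-1$, and ``exactness'' is a claim about every admissible $m$. Your three cases verify exactness only at $m=n$ and $m=n+1$; the entire open content of the conjecture is the range $m\ge n+2$, about which neither you nor the paper says anything, and none of the tools you invoke extends there without substantial new work (for $m\ge n+1$ the fold of Lemma~\ref{lemma2} retains both constant and injective maps, and the discrete Morse analysis of Section~4 is carried out only for the single target $K_{n+1}$). Worse, your own computation for $2\le m<n$ gives $\text{conn}=m-3$, which is strictly larger than $m-n-1$ whenever $n\ge 3$, so in that range the bound is \emph{not} exact; the conjecture must therefore be read as restricted to $m\ge n$, and taken literally your case analysis would refute it rather than prove it. The concrete gap: you offer no argument that $\text{Hom}(K_2\times K_n,K_m)$ fails to be $(m-n)$-connected for any $m\ge n+2$, which is precisely what the conjecture asserts beyond the cases already settled in the paper.
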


\section{Preliminaries}

A  graph $G$ is a  pair $(V(G), E(G))$,  where $V(G)$  is the set of vertices of $G$  and $E(G) \subset V(G)\times V(G)$ denotes the set of edges.
 If $(x, y) \in E(G)$, it is also denoted by $x \sim y$ and $x$ is said to be adjacent to $y$.
The {\it degree} of a vertex $v$ is defined  as $\text{deg}(v) =
|\{y \in V(G) \ | \ x \sim y\}|$ ($| X|$ represents the
cardinality of the set $X$).

\begin{itemize}
\item A {\it  bipartite graph} is a graph $G$ with subsets  $X$ and $Y$ of $V(G)$ such that $V(G)=X   \sqcup Y$ and $(v,w) \notin E(G)$ if $\{v,w\} \subset X$ or $\{v,w\} \subset Y$.

A standard example of bipartite graphs are the even cycles $C_{2n}$
where $V(C_{2n})=\{1,2,\dots, 2n\}$ and $E(C_{2n})=\{(i,i+1) \ | \
1\leq i \leq 2n-1\}\cup\{(1,2n)\}$. In this case $X=\{1,3,5,\ldots
,2n-1\}$ and $Y=\{2,4,6,\dots,$ $  2n\}$.

\item A {\it graph homomorphism} from  $G$ to $H$ is a function $\phi: V(G) \to V(H)$ such that, $$(v,w) \in E(G) \implies (\phi(v),\phi(w)) \in E(H).$$

\item
A {\it finite abstract simplicial complex X} is a collection of
finite sets where $\tau \in X$ and $\sigma \subset \tau$,
implies $\sigma \in X$. The elements  of $X$ are called the  {\it simplices}
of $X$. If $\sigma \in X$ and $|\sigma |=k+1$, then $\sigma$ is said
to be $k\,dimensional$. A $k-1$ dimensional subset of a $k$ simplex
$\sigma$ is called a {\it facet} of $\sigma$.

\item
Let $v$ be a vertex of a graph $G$. The {\it
neighbourhood  of $v$ }is defined as $N(v)=\{ w \in V(G) \ |  \
(v,w) \in E(G)\}$.  If $A\subset V(G)$, the neighbourhood $A$
is defined as $N(A)= \{x \in  V(G) \ | \ (x,a) \in E(G)
\,\,\forall\,\, a \in A \}$.

\item
The {\it neighborhood complex} $\N(G)$ of a graph $G$ is the abstract simplicial complex whose simplices are those subsets of 
vertices of $G$, which have a common neighbor.

\item Let $G$ be a graph and $N(u) \subset N(v)$  for two distinct vertices $u$ and $V$ of $G$. The graph $G\setminus \{u\}$ is called a {\it fold} of $G$.  Here,  $V(G\setminus
\{u\}) = V(G) \setminus \{u\}$ and the edges in the subgraph
$G\setminus \{u\}$ are all those edges of $G$ which do not contain
$u$.

\item
  Let  $X$ be a simplicial complex and $\tau, \sigma \in X$ such that
$\sigma \subsetneq \tau$ and  $\tau$ is the only maximal simplex in $X$ that contains $\sigma$.
  A  {\it simplicial collapse} of $X$ is the simplicial complex $Y$ obtained from $X$ by
  removing all those simplices $\gamma$  of $X$ such that
  $\sigma \subseteq \gamma \subseteq \tau$. $\sigma$ is called a {\it free face} of $\tau$ and $(\sigma, \tau)$ is called a {\it collapsible pair} and is denoted by $X \searrow Y$.

\item
For any two graphs $G$ and $H$, $\Hom(G,H)$ is the polyhedral
complex whose cells are indexed by all functions $\eta: V(G) \to
2^{V(H)}\setminus \{\emptyset\}$, such that if $(v,w) \in E(G)$ then
$\eta(v) \times \eta(w) \subset E(H)$.

 Elements of  $\Hom(G,H)$ are called cells and are denoted by $(\eta(v_1), \dots \\ \eta(v_k))$, where $V(G)=\{v_1, \dots, v_k\}$.
A cell $(A_1, \dots, A_{k})$ is called a {\it face} of $B=(B_1,
\dots B_k)$, if $A_{i} \subset B_{i}$ $\forall\, 1\leq i\leq k$. The
Hom complex is often referred to as a topological space. Here, we
are referring to the geometric realisation of the order complex of
the poset. The simplicial complex whose simplices are the chains of
the Poset $P$ is called the order complex of $P$.

\item
A  topological space $X$ is said to be $n$ {\it connected} if $\pi_\ast(X)= 0$ for all $ \ast \leq n$.\\
 By convention, $\pi_0(X)= 0 $ means $X$ is connected.
The connectivity of a topological space $X$ is denoted by
$\text{conn}(X)$, {\it i.e.}, $\text{conn}(X)$ is the largest
integer $m$ such that $X$ is $m$ connected. If $X$ is a non empty
disconnected space, it is said to be $-1$ connected and if it is
empty, it is said to be $-\infty$  connected.
\end{itemize}

We now review some of the constructions related to the existence of an {\it internal hom} which is related to the categorical product.  Details can be found in \cite{pj, cj, sm}.

\begin{itemize}
\item
The {\it categorical product} of two graphs $G$ and $H$, denoted by $G\times H$ is the graph where $V(G\times H)=V(G)\times V(H)$ and  $(g,h) \sim (g',h')$ in $G\times H$  if $g \sim g'$  and $h \sim h'$ in $G$ and $H$ respectively.

\item
If $G$ and $H$ are two graphs, then the {\it exponential graph} $\displaystyle H^{G}$ is defined to be the graph where $\displaystyle V(H^{G})$ contains all the set maps from $V(G)$ to $V(H)$.  Any two vertices  $f$  and $f'$ in $\displaystyle V(H^{G})$ are  said to be adjacent, if $ v \sim v'$  in $G$ implies that $f(v) \sim f'(v')$ in $H$.

\end{itemize}

Using tools from poset topology (\cite{bj}), it can be shown that given a poset $P$ and a poset map $c:P \rightarrow P$ such that $ c\circ c =c$ and $c (x) \geq x, \forall \,x \in P$, there is a strong deformation retract induced by $c :P \rightarrow c(P)$ on the relevant spaces. Here, $c$ is called the {\it closure map}.

From \cite[Proposition 3.5]{ad} we have a relationship between the exponential graph and the categorical product in the $\text{Hom}$-complex.

\begin{prop}
Let $G$, $H$ and $K$ be graphs. Then  $\displaystyle \text{Hom}(G \times H, K)$ can be included in $\displaystyle \text{Hom}(G, K^{H})$  so that $\displaystyle \text{Hom}(G \times H, K)$ is the image of the closure map on $\displaystyle \text{Hom}(G, K^{H})$. In particular, there is a strong deformation retract  $\displaystyle |\text{Hom}(G \times H, K)| \hookrightarrow |\text{Hom}(G, K^{H})|$.
\end{prop}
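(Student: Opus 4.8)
The plan is to realize the classical graph-exponential adjunction $\Hom(G\times H, K)\cong \Hom(G, K^{H})$ at the level of the cells of the Hom complexes, and then to exhibit the image of $\Hom(G\times H,K)$ inside $\Hom(G,K^{H})$ as the fixed-point set of an explicit closure map. Recall that a cell of $\Hom(G\times H,K)$ is a function $\eta$ assigning to each pair $(a,b)\in V(G)\times V(H)$ a nonempty set $\eta(a,b)\subseteq V(K)$, subject to $\eta(a,b)\times\eta(a',b')\subseteq E(K)$ whenever $(a,b)\sim(a',b')$ in $G\times H$; a cell of $\Hom(G,K^{H})$ is a function $\tilde\eta$ assigning to each $a\in V(G)$ a nonempty set $\tilde\eta(a)\subseteq V(K^{H})$ of set-maps $V(H)\to V(K)$, subject to the edge condition of $K^{H}$. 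First I would define the inclusion $\iota:\Hom(G\times H,K)\to \Hom(G,K^{H})$ by
$$\iota(\eta)(a)=\{\,f:V(H)\to V(K)\mid f(b)\in\eta(a,b)\ \text{for all}\ b\in V(H)\,\},$$
so that $\iota(\eta)(a)$ is the set of all sections of the family $(\eta(a,b))_{b}$. This set is nonempty because each $\eta(a,b)$ is, and one checks readily that $\iota$ is injective and order-preserving (componentwise inclusion $\eta(a,b)\subseteq\eta'(a,b)$ passes to the sections), so $\iota$ realizes $\Hom(G\times H,K)$ as a subposet.

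Next I would verify that $\iota(\eta)$ is a genuine cell of $\Hom(G,K^{H})$ and, in the same breath, define the closure map. For a nonempty set $S\subseteq V(K^{H})$ and $b\in V(H)$ write $\mathrm{ev}_b(S)=\{f(b)\mid f\in S\}$ for the image under evaluation at $b$, and set
$$c(\tilde\eta)(a)=\{\,g:V(H)\to V(K)\mid g(b)\in \mathrm{ev}_b(\tilde\eta(a))\ \text{for all}\ b\in V(H)\,\},$$
the smallest ``product-shaped'' set containing $\tilde\eta(a)$. Three properties must then be recorded: that $c$ is order-preserving (immediate, since $S\subseteq S'$ forces $\mathrm{ev}_b(S)\subseteq\mathrm{ev}_b(S')$); that $c(\tilde\eta)\geq\tilde\eta$ in the face poset (immediate, since every $f\in\tilde\eta(a)$ satisfies $f(b)\in\mathrm{ev}_b(\tilde\eta(a))$); and that $c\circ c=c$ (because evaluating a product set at $b$ returns exactly $\mathrm{ev}_b$ of the original, the factors being nonempty). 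The image of $c$ is precisely the set of cells whose values are product sets, which is exactly the image of $\iota$, so $c$ is the desired idempotent retraction of $\Hom(G,K^{H})$ onto $\Hom(G\times H,K)$.

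The crux, and the step I expect to require the most care, is showing that the closure map lands in $\Hom(G,K^{H})$, that is, that $c(\tilde\eta)$ satisfies the edge condition whenever $\tilde\eta$ does. Concretely, for $a\sim a'$ in $G$ and $g\in c(\tilde\eta)(a)$, $g'\in c(\tilde\eta)(a')$, one must show $(g,g')\in E(K^{H})$, i.e.\ that $g(b)\sim g'(b')$ in $K$ for every edge $b\sim b'$ of $H$. The argument is to choose witnesses: since $g(b)\in\mathrm{ev}_b(\tilde\eta(a))$ there is $f\in\tilde\eta(a)$ with $f(b)=g(b)$, and likewise $f'\in\tilde\eta(a')$ with $f'(b')=g'(b')$; then the edge condition for the original cell $\tilde\eta$ at $a\sim a'$ gives $(f,f')\in E(K^{H})$, whence $f(b)\sim f'(b')$, i.e.\ $g(b)\sim g'(b')$. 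The subtlety is that the witnessing sections $f,f'$ depend on the chosen pair $(b,b')$, so the verification must be carried out edge-by-edge in $H$ rather than globally; the identical bookkeeping shows $\iota(\eta)$ is a cell. With these facts in hand, $c$ is a closure map in the sense recalled before the statement, and the cited result from poset topology yields the strong deformation retract $|\Hom(G\times H,K)|\hookrightarrow|\Hom(G,K^{H})|$.
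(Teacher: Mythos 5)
Your proof is correct. The paper gives no proof of this proposition---it is quoted from Dochtermann \cite{ad} (Proposition 3.5 there)---and your argument, which realizes the cells of $\Hom(G\times H,K)$ as the product-shaped cells of $\Hom(G,K^{H})$ and takes $c(\tilde\eta)(a)$ to be the smallest product set containing $\tilde\eta(a)$, verifying the edge condition with witnesses chosen per edge of $H$, is essentially the standard argument given in that reference.
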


From \cite[Proposition 5.1]{BK} we have the following result which allows us to replace a graph by a subgraph in the $\text{Hom}$ complex.

\begin{prop}\label{fold}
Let $G$ and $H$ be graphs such that $u, v $ are distinct vertices of $G$ and $N(u) \subset N(v)$. The inclusion $i :G \setminus \{u\}  \hookrightarrow G$ respectively,
 the homomorphism  $\phi:G \rightarrow G \setminus \{u\} $ which maps $v$ to $u$ and fixes all the other vertices,
  induces the homotopy equivalence  $i_{H} :\text{Hom} (G,H)\rightarrow \text{Hom}(G\setminus \{u\},  H)$,‪  respectively  $\phi_{H} :\text{Hom}(G\setminus \{u\},  H) \rightarrow \text{Hom}(G,H)$.
\end{prop}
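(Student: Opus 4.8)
The plan is to work entirely with the face posets of the two Hom complexes, since $\Hom(G,H)$ is by definition the order complex of the poset $P(G)$ whose elements are the cells $\eta\colon V(G)\to 2^{V(H)}\setminus\{\emptyset\}$ ordered by componentwise inclusion (and similarly for $P(G\setminus\{u\})$). The graph maps $i$ and $\phi$ induce order-preserving maps of these posets: $i_H$ restricts a cell $\eta$ to $V(G)\setminus\{u\}$, while $\phi_H$ sends a cell $\nu$ to $\nu\circ\phi$, i.e.\ to the cell agreeing with $\nu$ off $u$ and taking the value $\nu(v)$ at $u$ (so the fold $\phi$ should send $u\mapsto v$ and fix the other vertices). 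Both are well defined because $i$ is an inclusion of graphs and $\phi$ is a graph homomorphism. First I would record the two composites: $i_H\circ\phi_H=\mathrm{id}$ holds on the nose, since restricting $\nu\circ\phi$ to $V(G)\setminus\{u\}$ returns $\nu$; and $\phi_H\circ i_H$ equals the self-map $c_0$ of $P(G)$ that leaves every coordinate fixed except replacing $\eta(u)$ by $\eta(v)$.

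The heart of the argument is a closure operator on $P(G)$. Define $c\colon P(G)\to P(G)$ by $c(\eta)(u)=\eta(u)\cup\eta(v)$ and $c(\eta)(x)=\eta(x)$ for all $x\neq u$ (in particular $\eta(v)$ is left unchanged). I would first check that $c(\eta)$ is again a legitimate cell; this is the only place the hypothesis $N(u)\subset N(v)$ is used, and it is the step I expect to be the main (though short) obstacle. The only edges whose condition could be affected are those incident to $u$: for such an edge $(u,b)$ one has $b\in N(u)\subset N(v)$, hence $(v,b)\in E(G)$, so both $\eta(u)\times\eta(b)\subset E(H)$ and $\eta(v)\times\eta(b)\subset E(H)$, and therefore $(\eta(u)\cup\eta(v))\times\eta(b)\subset E(H)$. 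Edges incident to $v$ but not to $u$ are untouched precisely because $c$ does not enlarge $\eta(v)$, which is why one must enlarge the $u$-coordinate rather than the $v$-coordinate. It is then routine that $c$ is order-preserving, idempotent, and satisfies $c(\eta)\geq\eta$, so $c$ is a closure map in the sense recalled just before Proposition~\ref{fold}.

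To finish, I would combine two facts. By the closure-map result, $c$ induces a strong deformation retract of $|\Hom(G,H)|$ onto $|c(P(G))|$, and in particular the self-map $|c|$ is homotopic to the identity. On the other hand $c_0(\eta)\leq c(\eta)$ for every $\eta$ (the $u$-coordinates satisfy $\eta(v)\subset\eta(u)\cup\eta(v)$ and all other coordinates agree), and two order-preserving poset maps $f\leq g$ induce homotopic maps $|f|\simeq|g|$ on order complexes; hence $|c_0|\simeq|c|$. Chaining these gives $|\phi_H|\circ|i_H|=|c_0|\simeq\mathrm{id}_{|\Hom(G,H)|}$, while $|i_H|\circ|\phi_H|=\mathrm{id}_{|\Hom(G\setminus\{u\},H)|}$ from the first paragraph. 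Therefore $i_H$ and $\phi_H$ are mutually inverse homotopy equivalences, as claimed. The one genuinely content-bearing step is the well-definedness of $c$, where $N(u)\subset N(v)$ is indispensable; everything else is formal poset topology.
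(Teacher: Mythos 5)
Your proof is correct, but there is nothing in the paper to compare it against: the paper does not prove this proposition, it quotes it (as Proposition 5.1 of \cite{BK}) and uses it as a black box. Note also that the statement as printed contains a typo --- the fold $\phi\colon G\to G\setminus\{u\}$ must send $u$ to $v$ and fix the rest, not ``$v$ to $u$'' --- and you correctly silently repair this. Your argument is the standard closure--operator proof of folding, and it meshes exactly with the machinery the paper recalls immediately before the statement (the closure-map lemma for posets). All the substantive steps check out: $c(\eta)(u)=\eta(u)\cup\eta(v)$ is again a cell because every $b$ with $(u,b)\in E(G)$ satisfies $b\in N(u)\subset N(v)$, hence $(v,b)\in E(G)$ and $\eta(v)\times\eta(b)\subset E(H)$ (this even covers the degenerate case $b=v$, where the hypothesis forces a loop at $v$); $c$ is an ascending idempotent poset map, so $|c|\simeq \mathrm{id}$; and $\phi_H\circ i_H=c_0\leq c$ together with $i_H\circ\phi_H=\mathrm{id}$ gives mutually inverse homotopy equivalences. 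The only ingredient you invoke without justification is the order-homotopy lemma (order-preserving maps $f\leq g$ induce homotopic maps on order complexes); it is standard and can be cited from Bj\"orner's Handbook chapter \cite{bj}, but it is a genuine load-bearing input and should be referenced explicitly. What your route buys, relative to simply citing \cite{BK}, is a short self-contained proof that isolates precisely where $N(u)\subset N(v)$ is used, namely in the well-definedness of the single closure map $c$.
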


\section{Tools from Discrete Morse Theory}

We introduce some tools from Discrete Morse Theory  which have been
used in this article. R. Forman in \cite{f} introduced what has now
become a standard tool in Topological Combinatorics, Discrete Morse
Theory. The principal idea of Discrete Morse Theory (simplicial) is
to pair simplices in a complex in such a way that they can be
cancelled by elementary collapses. This will reduce the original
complex to  a homotopy equivalent complex, which is not necessarily
simplicial, but which has fewer cells. More details of discrete
Morse theory can be found in \cite{jj} and \cite{dk}.

\begin{defn}
A {\it partial matching} in a poset $P$ is a subset $\M$ of $P \times P$ such that
\begin{itemize}
\item $(a,b) \in \M$ implies $b\succ a$, {\it i.e. $a<b$ and $\not\exists \,c$ such that $a<c<b$}.
\item Each element  in $P$ belongs to at most one element in $\M$.
\end{itemize}
\end{defn}

In other words, if $\M$ is a {\it partial matching} on a poset $P$
then there exists  $A \subset P$ and an injective map $f: A 
\rightarrow P\setminus A$ such that $x \prec f(x)$ for all $x \in A$. 

\begin{defn}
An {\it acyclic matching} is a partial matching  $\M$ on the Poset $P$ such that there does not exist a cycle
\begin{eqnarray*}
x_1 \prec f(x_1)  \succ x_2 \prec  f( x_2) \succ x_3 \dots \prec f(x_t) \succ x_1, t\geq 2.
\end{eqnarray*}

\end{defn}

Given an acyclic partial matching on $P$, those elements of $P$ which do not belong to the matching are said to be {\it critical }. To obtain the  desired homotopy equivalence, the following result is used.

\begin{thm} (Main theorem of Discrete Morse Theory)\label{dmt}\cite{f}
\mbox{}

Let $X$ be a simplicial complex and let $\A$ be an acyclic matching such that the empty set is not critical. Then, $X$ is homotopy equivalent to a cell complex which has a $d$ -dimensional cell for each $d$ -dimensional critical face of $X$ together with an additional $0$-cell.

\end{thm}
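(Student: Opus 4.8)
The plan is to prove the statement by converting the combinatorial data of $\M$ into a sequence of elementary simplicial collapses, punctuated by the attachment of one cell for each critical face. The whole argument rests on two facts: that collapsing a single matched pair does not change homotopy type, and that the \emph{acyclicity} of $\M$ permits all such collapses to be performed consistently in a single linear order. As a first step I would pass from the matching to an equivalent \emph{discrete Morse function} $f \colon X \to \bR$ whose critical simplices are exactly the $\M$-unmatched ones; this is the standard translation between acyclic matchings and discrete Morse functions, and it furnishes a filtration of $X$ by the sublevel subcomplexes $X_c = \bigcup_{f(\sigma) \le c} \{\gamma : \gamma \subseteq \sigma\}$, which is what drives the induction.

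The local building block is the elementary collapse. If $(\sigma, \tau) \in \M$ with $\tau \succ \sigma$, and if at the relevant stage $\tau$ is the unique maximal simplex containing $\sigma$, then $\sigma$ is a free face of $\tau$ and deleting the interval $\{\gamma : \sigma \subseteq \gamma \subseteq \tau\} = \{\sigma, \tau\}$ is a deformation retraction, hence a homotopy equivalence. Consequently, whenever the parameter $c$ increases across a range of values containing no critical simplex, every simplex newly entering the filtration is matched, the pairs collapse away, and $X_b \searrow X_a$; the homotopy type is unchanged.

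The decisive step is the passage of $c$ across a single critical simplex $\rho$ of dimension $d$. Here the simplices that enter, together with $\rho$, must organize into $\rho$ glued to a collection of collapsible matched pairs, so that $X_b$ is homotopy equivalent to $X_a$ with exactly one $d$-cell attached along its boundary sphere. Iterating the two cases over all values of $c$ assembles a CW complex carrying one $d$-cell per critical $d$-simplex; the lone extra $0$-cell is the basepoint that survives after the (matched, hence non-critical) empty simplex has been collapsed, which is exactly why the hypothesis that $\emptyset$ is not critical is imposed.

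The main obstacle is justifying that the collapses of the matched pairs can genuinely be scheduled so that, at each stage, the face being removed is free --- the local building block only applies once that is known. This is precisely the content of acyclicity. I would encode $\M$ as a modified Hasse diagram $D$ on the faces of $X$: orient every covering relation from the larger face to the smaller one, and then reverse the orientation on the matched edges. A directed cycle in $D$ is exactly a closed gradient path, so the hypothesis that $\M$ has no such cycle says that $D$ is acyclic and therefore admits a linear extension. Processing the matched pairs in (a refinement of) this order guarantees that when a pair $(\sigma, \tau)$ is reached, every face that could prevent $\sigma$ from being free has already been removed, so each collapse is legal. Checking that this ordering is always available, and that the single-critical-cell passage really attaches one cell of the correct dimension, are the two technical points that demand the most care.
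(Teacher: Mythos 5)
The paper does not prove this theorem---it is quoted directly from Forman \cite{f}---so there is no in-paper argument to compare against; your outline is precisely the standard proof from that literature (Forman's induction over the level subcomplexes of a discrete Morse function, combined with the observation that acyclicity of the modified Hasse diagram yields a linear extension and hence a legal schedule of elementary collapses, with the vertex matched to $\emptyset$ surviving as the extra $0$-cell). The outline is correct, and the two points you flag at the end---that each matched face is genuinely free when its collapse is scheduled, and that crossing a single critical $d$-simplex amounts to attaching one $d$-cell along its boundary---are exactly where Forman's original proof does all of its work, so a complete write-up would need to carry those out rather than defer them.
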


\section{Main Result}

To prove the Theorems \ref{thm1} and  \ref{thm2}, we first construct
an acyclic matching on the face poset of $\N (K_{n+1}^{K_{n}})$
after which we construct the Morse Complex corresponding to this
acyclic matching and use this complex to compute the homology
groups.

In this article $n \geq 3$ and $[n]$ denotes the
set $\{1,2,\dots, n\}$. Any vertex in the exponential graph
$K_{n+1}^{K_n}$ is a set map $f: K_{n} \rightarrow K_{n+1}$.

\begin{lem} \label{lemma2}

The graph $K_{m}^{K_n}$  can be folded onto the graph $G$, where the vertices
$f \in V(G)$ have images of cardinality either 1 or $n$.

\end{lem}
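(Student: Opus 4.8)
The plan is to show that every vertex $f$ of $K_m^{K_n}$ whose image has cardinality strictly between $1$ and $n$ can be folded away, with all folds directed at the constant maps (the vertices of image-cardinality $1$), which survive. First I would record the exact neighbourhood structure in $K_m^{K_n}$. Since $K_n$ is complete, $i \sim j$ means $i \neq j$, so by the definition of the exponential graph $f' \sim f$ precisely when $f'(j) \neq f(i)$ for every pair $i \neq j$. For a fixed $j$ this is a condition on $f'(j)$ alone, so writing $A_j(f) = [m] \setminus \{f(i) : i \neq j\}$ we obtain $N(f) = \{f' : f'(j) \in A_j(f) \text{ for all } j\}$; that is, $N(f)$ is the set of maps whose $j$-th value lies in $A_j(f)$ for each $j$.

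From this description the fold criterion is transparent: if $\{g(i):i\neq j\}\subseteq\{f(i):i\neq j\}$ for every $j$, then $A_j(f)\subseteq A_j(g)$ for every $j$, and hence $N(f)\subseteq N(g)$. Now take any $f$ with $2\le |\mathrm{Im}(f)|\le n-1$. Because $|\mathrm{Im}(f)|<n$, the pigeonhole principle gives a value $c$ attained by $f$ at two or more indices. Let $g$ be the constant map $g\equiv c$. Then $\{g(i):i\neq j\}=\{c\}$, and since $c$ occurs at $\ge 2$ indices it survives the deletion of any single index $j$, so $\{c\}\subseteq\{f(i):i\neq j\}$ for all $j$. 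Hence $N(f)\subseteq N(g)$ with $g\neq f$, and $f$ may be folded onto $g$. The same computation shows a constant map satisfies the inclusion only against itself, and (when $m\ge n$) an injective image-cardinality-$n$ map has no repeated value, so neither type is ever eligible to be folded away; this pins down the surviving subgraph as exactly $G$.

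It remains to replace this single fold by the full sequence. I would delete the intermediate vertices one at a time in any order, always folding onto the appropriate constant map. The point that legitimises this is that folding never disturbs a later fold: the target constant maps are never removed, and if $N(u)\subseteq N(v)$ holds in the whole graph then $N(u)\cap W\subseteq N(v)\cap W$ in any induced subgraph on a vertex set $W$ containing $u$ and $v$. Thus each inclusion needed along the way is inherited from the inclusion already verified in $K_m^{K_n}$, and both endpoints of every fold are present when that fold is performed. After all intermediate vertices are removed we are left with the subgraph $G$ on the maps of image-cardinality $1$ or $n$, which is the assertion of the lemma.

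I expect the only genuinely delicate point to be the persistence of the fold condition through the sequence of deletions; everything else reduces to the product-style description of $N(f)$ and a pigeonhole count. One should also note the degenerate ranges (for instance $m<n$, where image-cardinality $n$ is impossible and some $A_j(f)$ may be empty, giving isolated vertices): there the relevant part of the statement is vacuous or the isolated vertices can be treated separately, so the argument above covers precisely the cases, such as $m=n+1$, needed in the sequel.
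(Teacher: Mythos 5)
Your argument is correct and is essentially the paper's proof: both identify a repeated value $c=f(i)=f(j)$ of a non-injective, non-constant $f$ and show $N(f)\subseteq N(\langle c\rangle)$ for the constant map at $c$, then fold away all such $f$ one by one. Your extra remarks (the explicit product description of $N(f)$ via the sets $A_j(f)$, and the observation that the inclusion of neighbourhoods persists in induced subgraphs so the folds can be iterated) only make explicit what the paper leaves implicit.
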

\begin{proof} Consider the vertex $f$ such that  $1<|$Im $f|$ $< n$.  Since $f$ is not injective
there exist distinct $i, j \in [n]$ such that $f(i)=f(j) = \alpha$.
Consider  $\tilde{f} \in V(K_{m}^{K_n})$ such that
$\tilde{f}([n])=\alpha$. By the definition of the exponential graph, any
neighbour $h$  of $f$ will not have $\alpha$ in its image  and
therefore $h$ will be a neighbour of $\tilde{f}$ thereby showing
that $N(f) \subset N(\tilde{f})$.
 $K_{m}^{K_n}$ can be folded to the subgraph $K_{m}^{K_n}\setminus \{f\}$.
  Repeating the argument for all noninjective, non constant maps from $[n]$ to $[m]$,   $K_{m}^{K_n}$
can be folded to the graph $G$ whose vertices are either constant or injective maps from $[n]$ to $[m]$.
\end{proof}

From Proposition \ref{fold}, we observe that $\N (K_{n+1}^{K_n})  \simeq \N (G)$. Hence, it
is sufficient to study the homotopy type of $\N (G)$.

Henceforth, if $f \in V(G)$ and $f([n])=\{x\}$,  $f$ shall be denoted by $<x>$. In the other cases  the string $a_1a_2\dots a_n$ will denote the vertex   $f$ where $a_i =f(i)$,  $1\leq i \leq n.$ Hence, if the notation $a_1a_2\dots a_{n}$ is used, it is understood that  for $1\leq i<j\leq n$, $a_i \neq a_j$

Let $f =a_1a_2 \ldots a_n \in V(G)$ and $x \notin
\text{Im}\,f$. Define $A_i^f$ to be the set $\{a_1,a_2, \ldots,\hat{a_i},\ldots,
a_n\} = \{a_1, \ldots, a_n\} \setminus \{a_i\}$.
 The map $f_{k}$ is defined on $[n]$ by
  \begin{center}
            $f_k(i)= \begin{cases}
            f(i) , & \text{if $k \neq i $},\\
x, & \text{if $k = i$}.\\
                       \end{cases}$
  \end{center}

 We first consider the maximal simplices of $\N(G)$.

\begin{lem} \label{lemmab}
Let $f \in V(G)$. Then
\begin{enumerate}
 \item [(i)]$f=a_1a_2\dots a_n, \,x \notin \text{Im}\,f \Rightarrow N(f) = \{f, <x>, f_1,
f_2, \ldots, f_n\}$.
\item [(ii)]
$f=<x> \ \Rightarrow N(f) = \{ <y>|\, y \neq x\}\cup\{g \in V(G)| x
\notin \text{Im}\,g\}$.
\end{enumerate}
\end{lem}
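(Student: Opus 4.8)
The plan is to reduce both statements to an explicit combinatorial form of the exponential-graph adjacency and then simply enumerate the vertices of $G$ that satisfy it. Since $G$ is a subgraph of $K_{n+1}^{K_n}$, adjacency in $G$ is inherited, so for $f, g \in V(G)$ unwinding the definition of the exponential graph (with source $K_n$ and target $K_{n+1}$, both complete) gives the criterion
\[ g \sim f \iff g(i) \neq f(j) \ \text{ for all } i \neq j \text{ in } [n]. \]
I would first record two immediate consequences: the relation is symmetric (the set of ordered pairs $(i,j)$ with $i \neq j$ is closed under transposition), and an injective map is adjacent to itself, because injectivity of $f$ is exactly the statement that $f(i) \neq f(j)$ whenever $i \neq j$. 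This self-loop is what will place $f$ in its own neighborhood in part (i).

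For part (i), fix $f = a_1 \dots a_n$ and let $x$ be the unique omitted value, so that $[n+1] = \{a_1, \dots, a_n\} \sqcup \{x\}$. The central step is to note that for a candidate $g$ and a fixed coordinate $i$, the condition $g(i) \neq a_j$ for every $j \neq i$ confines $g(i)$ to the two-element complement $\{a_i, x\}$. I would then split on the two possible shapes of a vertex of $G$. If $g$ is constant, its common value must lie in $\bigcap_{i} \{a_i, x\}$, which equals $\{x\}$ because the $a_i$ are distinct and $n \geq 2$; hence $g = <x>$. If $g$ is injective, the single unused colour $x$ can occur among the $g(i)$ at most once, so either $g(i) = a_i$ for all $i$, giving $g = f$, or exactly one coordinate $k$ satisfies $g(k) = x$ with $g(i) = a_i$ otherwise, giving $g = f_k$. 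A short converse check confirms that each $f_k$ is injective (it replaces $a_k$ by the previously unused $x$) and meets the criterion, so $N(f) = \{f, <x>, f_1, \dots, f_n\}$.

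For part (ii), fix $f = <x>$, so that $f(j) = x$ for every $j$. Here the criterion degenerates: since for each $i$ there is some $j \neq i$ (as $n \geq 2$), adjacency $g \sim f$ is equivalent to $g(i) \neq x$ for all $i$, that is, to $x \notin \text{Im}\,g$. Conversely any such $g$ is a neighbor, so I just enumerate the vertices of $G$ omitting $x$: the constant maps $<y>$ with $y \neq x$ together with the injective maps whose image avoids $x$, which is precisely the asserted set. It is worth noting that $<x> \notin N(<x>)$, consistent with the symmetry observation, since $x \in \text{Im}\,<x>$.

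The argument is essentially bookkeeping, so there is no serious obstacle; the only points demanding care are the reduction $g(i) \in \{a_i, x\}$ and the observation that injectivity of $g$ caps at one the number of coordinates equal to the unique missing colour $x$. The remaining routine checks are that every map in the asserted lists genuinely lies in $V(G)$ and genuinely satisfies the adjacency relation, and remembering to include the self-loop contribution $f \in N(f)$ in part (i).
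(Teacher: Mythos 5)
Your proof is correct and follows essentially the same route as the paper: both reduce adjacency in the exponential graph to the criterion $g(i)\neq f(j)$ for $i\neq j$, deduce $g(i)\in\{a_i,x\}$ in part (i), and then use the fact that vertices of $G$ are either constant or injective to enumerate the neighbors. Your treatment of the injective case (the missing colour $x$ appearing at most once) just makes explicit a step the paper leaves implicit.
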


\begin{proof} \mbox{}
\begin{enumerate}
\item [(i)] Since $a_i \neq a_j$, $\forall \ i \neq j$, $f \sim f$.  If $g= \ <x>$, then $g(i) \neq f(j)$ for $i\neq j$ and thus $<x> \ \in N(f)$.
For any $l \in [n]$, $f_{l}(i) \neq f(j)$ for $i\neq j$ which implies $f_{l}\in N(f)$. Thus $\{f, <x>, f_1,
f_2, \ldots, f_n\} \subset N(f)$.
Conversely if  $\tilde{f} \in N(f)$, then $\tilde{f} (i) \in \{a_i,
x\}$. Since $|\text{Im}\,\tilde{f}|=1$ or $n$, if $\tilde{f} \neq \
<x>$ then $\tilde{f}$ has to be  $f$ or $f_{l}$ for some $l \in[n]$.

\item [(ii)]Let $f = \ <x>$. Clearly, $<y> \ \sim \ <x>$ for all $y \neq x$. If $g \in V(G)$ and $x \notin \text{Im}\, g$, then $g \ \sim\  <x>$.
Conversely if $\tilde{f} \in N(f)$,  then  $x$ cannot belong to the
image of  $\tilde{f} $. Since $| \text{Im}\,\tilde{f}|$ has to be either
$1$ or $n$ from Lemma \ref{lemma2}, the proof follows.
\end{enumerate}
\end{proof}

We  now determine the free faces in $\N(G)$.

\begin{lem} \label{lemma3}
 Let $f \in V(G)$. Then
 \begin{enumerate}
  \item [(i)] $f=a_1a_2\dots a_n \Rightarrow (\{f_s,f_t\}, N(f))$ is a collapsible pair
  $\forall$ $1 \leq s < t \leq n$.

  \item [(ii)] $f = \, <y>$,  $g\neq \tilde{g} \in V(G)$ non constant neighbours of $f
  \Rightarrow$ $(\{g, \tilde{g}\},$ $  N(f))$ is  a collapsible pair.
 \end{enumerate}
\end{lem}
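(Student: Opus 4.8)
The plan is to reduce the collapsibility assertion to a statement about common neighbours. Recall that a subset of $V(G)$ is a simplex of $\N(G)$ precisely when its elements admit a common neighbour, so the maximal simplices of $\N(G)$ are exactly the maximal members of the family $\{N(w) : w \in V(G)\}$; in particular every maximal simplex has the form $N(w)$. Hence, to show that $(\sigma, N(f))$ is a collapsible pair it suffices to verify the strict inclusion $\sigma \subsetneq N(f)$ (immediate from the descriptions in Lemma \ref{lemmab}), that $N(f)$ is a maximal simplex, and that $N(f)$ is the unique maximal simplex containing $\sigma$. Since any maximal simplex containing $\sigma$ is some $N(w)$ with $\sigma \subseteq N(w)$, i.e. with $w$ a common neighbour of the two vertices comprising $\sigma$, the whole statement follows once I show that in each case the two vertices of $\sigma$ have $f$ as their \emph{only} common neighbour. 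This simultaneously forces $N(f)$ to be maximal: if $N(f) \subseteq N(w)$ then $w$ is a common neighbour of both vertices of $\sigma$, whence $w = f$.

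For part (i), the two vertices are $f_s$ and $f_t$, each again injective, with $a_s \notin \text{Im}\,f_s$ and $a_t \notin \text{Im}\,f_t$. Applying Lemma \ref{lemmab}(i) to each, I would write out $N(f_s)$ and $N(f_t)$ explicitly and compute their intersection. The key bookkeeping observation is that every non-constant neighbour of $f_s$ other than $f$ carries the value $x$ in coordinate $s$, while every non-constant neighbour of $f_t$ other than $f$ carries $x$ in coordinate $t$; since $s \neq t$ these families are disjoint, the two constant neighbours $<a_s>$ and $<a_t>$ differ, and the only vertex common to $N(f_s)$ and $N(f_t)$ is $f$ (arising as $(f_s)_s = (f_t)_t = f$). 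This yields $N(f_s) \cap N(f_t) = \{f\}$, as required.

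For part (ii) the vertices $g, \tilde g$ are non-constant neighbours of $<y>$, hence injective maps with $y \notin \text{Im}\,g$ and $y \notin \text{Im}\,\tilde g$; as each omits exactly one value from $[n+1]$, both are bijections of $[n]$ onto $[n+1] \setminus \{y\}$. Again I would apply Lemma \ref{lemmab}(i) to list $N(g) = \{g, <y>, g_1, \dots, g_n\}$ and the analogous $N(\tilde g)$, and show their intersection is exactly $\{<y>\}$. The main obstacle here, and the one genuinely new input, is the verification that two \emph{distinct} such bijections are never adjacent: since $g$ and $\tilde g$ have the same image every value is attained by both, and if some value is attained in different coordinates $i \neq j$ then $g(i) = \tilde g(j)$ violates the adjacency condition, so $g \sim \tilde g$ would force $g = \tilde g$. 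The same "same image forces agreement" principle rules out every coincidence $g_i = \tilde g_j$ (such an equality would make $g$ and $\tilde g$ agree off a single coordinate, hence everywhere). Thus the only common neighbour is the constant map $<y> = f$, and combining the two cases with the reduction of the first paragraph completes the proof.
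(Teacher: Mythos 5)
Your proposal is correct and follows essentially the same route as the paper: both reduce the collapsibility claim to showing that the two vertices of $\sigma$ have $f$ as their unique common neighbour (using that every maximal simplex of $\N(G)$ has the form $N(w)$), and both verify this from the structure of neighbourhoods established in Lemma \ref{lemmab}. The only difference is bookkeeping --- you intersect the two explicitly listed neighbourhoods, while the paper pins down the coordinates of a hypothetical common neighbour via the sets $A_i^{f_s}$, $A_i^{f_t}$ --- and the two computations are equivalent.
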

\begin{proof} \hspace{10cm}
\begin{enumerate}
\item [(i)] From Lemma \ref{lemmab},
  $N(f)$ = $\{f, <x>, f_1, f_2, \ldots, f_n\}$, where $x \notin \text{Im} f$. All the maximal simplices of $\mathcal{N}(G)$
  are of the form $N(g)$,
  where $g \in V(G)$. Suppose there exists $\tilde{f} \in V(G)$
  such that $\{f_s, f_t\} \subset N(\tilde{f})$,  for some $1 \leq s < t \leq n$.
  Since $A_s^{f_s} = [n+1] \setminus
  \{a_s, x\}$ and $A_i^{f_s}$ = $[n+1] \setminus \{a_i, a_s\}$ if $i \neq s$, then for each $i \in [n]$ at least one of the sets
  $A_i^{f_s}$ or $A_i^{f_t}$ contains $x$. Therefore
  $A_i^{f_s} \cup A_i^{f_t}=[n+1] \setminus \{a_i\}\,\forall\,1 \leq i \leq n$. Since $\tilde{f}$ is a neighbour of both
  $f_s$ and  $f_t$,   $\tilde{f}(i) \neq f_{s}(j), f_{t}(j)\,\forall \,j \neq i$, which implies that
  $\tilde{f}(i) = a_i= f(i)$. Hence $\{f_s, f_t\}$ is free in $N(f)$.

 \item[(ii)] Since $g, \tilde{g}$ are neighbours of $f$ and $i \sim j$ in $K_n$ $\forall$ $i \neq j$, $ f(j) \neq g(i),\,\tilde{g}(i)$
implies that $y \notin \text{Im}\, g,\, \text{Im} \,\tilde{g}$,
which shows that $\text{Im}\,g = \text{Im} \,\tilde{g}$.
  Let $h \in V(G)$ such that $g, \tilde{g} \in N(h)$. Since $g, \tilde{g}$ are distinct and injective there exist $s \neq t \in [n]$ such that
 $g(s) \neq \tilde{g}(s)$ and $g(t) \neq \tilde{g}(t)$. $A_s^{g}$ = $[n+1] \setminus \{y, g(s)\}$ and
 $A_s^{\tilde{g}}$ = $[n+1]
 \setminus \{y, \tilde{g}(s)\}$. Since $g(s) \neq \tilde{g}(s)$, $A_s^{g}$ $\cup$ $A_s^{\tilde{g}}$ = $[n+1]
 \setminus \{y\}$.
 $h$ is a neighbour of $g$ and $\tilde{g}$ and $i \sim s$ in $K_n$ $\forall$ $i \neq s$,
 implies $h(s) \neq g(i), \tilde{g}(i)$.
 In particular, $h(s) \notin A_s^g \cup A_s^{\tilde{g}}$ and therefore $h(s) = y$ (similarly $h(t) = y$).
Therefore $h(i)=y$ $\forall \, i \in [n]$ and is equal to $f$.
  \end{enumerate}
\end{proof}

Let $M(X)$ be the set of maximal simplices in the simplicial complex $X$.

\begin{lem} \label{lemma1}
In a  simplicial complex $X$, let $\sigma =\{x_1,x_2,\dots, x_t, y_1, y_2, \dots, $ $  y_k\}$, $t\geq 2$
 be a maximal simplex such that $\{x_i,x_j\}$ is a free face of $\sigma$  for $1 \leq i < j \leq t$.  $X$
 collapses to the subcomplex $Y$ where $M(Y)$ = $M' \cup\,  \{ \{x_i, y_1, \dots, y_k\} |1 \leq i\leq t\}$ and $M'=M(X) \setminus \{\sigma\}$.
\end{lem}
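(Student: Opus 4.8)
The plan is to prove the assertion as a genuine collapse $X \searrow Y$ by exhibiting an explicit sequence of elementary (free-face) collapses that deletes precisely the faces of $\sigma$ containing at least two of the marked vertices $x_1,\dots,x_t$. Write $V_x=\{x_1,\dots,x_t\}$ and $V_y=\{y_1,\dots,y_k\}$, and set
\[
R=\{\gamma : \gamma\subseteq\sigma,\ |\gamma\cap V_x|\geq 2\}.
\]
The subcomplex $Y$ of the statement is exactly $X\setminus R$, and its facets are the listed sets. The first thing I would record is that the free-face hypothesis makes $R$ invisible to the rest of $X$: every $\gamma\in R$ contains some pair $\{x_i,x_j\}$, and since $\{x_i,x_j\}$ is a free face of $\sigma$, no maximal simplex other than $\sigma$ contains $\gamma$. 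Hence deleting $R$ leaves every simplex of $M'=M(X)\setminus\{\sigma\}$ intact, and all free-face checks need only be made ``inside'' $\sigma$.

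I would then prove $X\searrow Y$ by induction on $t$ (with $k\geq 1$, which holds in every application of the lemma). For the base case $t=2$, the pair $(\{x_1,x_2\},\sigma)$ is itself a collapsible pair, and the single collapse removes exactly the faces of $\sigma$ containing both $x_1$ and $x_2$; what survives of $\sigma$ are the faces missing $x_1$ or missing $x_2$, whose maximal elements are $\{x_1\}\cup V_y$ and $\{x_2\}\cup V_y$, as required.

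For the inductive step I would peel off the vertex $x_t$, using $y_1$ as a pivot (this is where $k\geq 1$ enters). Processing in order of decreasing dimension, I would collapse all pairs $(\alpha,\ \alpha\cup\{y_1\})$ with $\alpha\subseteq\sigma$ satisfying $x_t\in\alpha$, $y_1\notin\alpha$, and $\alpha\cap\{x_1,\dots,x_{t-1}\}\neq\emptyset$. The point of going top-down is that when $(\alpha,\alpha\cup\{y_1\})$ is reached, every other coface of $\alpha$ has already been deleted: a coface $\alpha\cup\{y_j\}$ or $\alpha\cup\{x_i\}$ is larger, still lies in $R$, and was removed at an earlier stage, while the free-face hypothesis guarantees $\sigma$ was the only maximal simplex of $X$ above $\alpha$. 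Thus $\alpha\cup\{y_1\}$ is genuinely the unique maximal simplex containing $\alpha$ at that moment, and the collapse is legitimate. This batch removes exactly the faces of $\sigma$ that contain $x_t$ together with at least one other $x_i$, so the maximal simplices inherited from $\sigma$ reduce to $\sigma'=\sigma\setminus\{x_t\}$ and $\tau_t=\{x_t\}\cup V_y$, with $M'$ still untouched.

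Finally I would restart the induction on $\sigma'$, whose marked vertices are $x_1,\dots,x_{t-1}$. The essential check is that each $\{x_i,x_j\}$ with $i<j\leq t-1$ is still a free face of $\sigma'$ in the reduced complex $X_1=X\setminus R_t$: the only maximal simplices that could contain it are $\sigma'$ itself, $\tau_t$ (which has no $x_i$ with $i\leq t-1$), and members of $M'$ (which contain no such pair, by hypothesis), so $\sigma'$ is the unique one. The induction hypothesis then collapses $\sigma'$ onto the simplices $\{x_i\}\cup V_y$, $i\leq t-1$, and together with $\tau_t$ these give precisely the generating set $M'\cup\{\{x_i\}\cup V_y : 1\leq i\leq t\}$ claimed for $Y$. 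I expect the main obstacle to be exactly the bookkeeping in the peeling step, namely verifying that each chosen pair is a collapsible pair at the stage it is processed and that the pairwise free-face property is inherited by $X_1$ so the induction can restart; as indicated above, both reduce to the single fact that the collapses never touch anything outside the faces of $\sigma$.
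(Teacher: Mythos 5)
Your proof is correct. The outer structure --- peel off one marked vertex at a time by induction, then check that the pairwise free-face hypothesis is inherited by the reduced complex so the induction can restart --- is the same as the paper's. Where you differ is in how each peeling step is realized as a sequence of collapses. The paper uses the hypothesized free pairs directly: it collapses $(\{x_1,x_2\},\sigma)$, then $(\{x_1,x_3\},\sigma\setminus\{x_2\})$, and so on, each step being a single interval collapse whose free face is one of the given pairs and whose top face is the current, shrinking, maximal simplex; after $t-1$ such collapses all faces containing $x_1$ together with some other $x_j$ are gone, and the process is repeated for $x_2,\dots,x_{t-1}$. You instead delete the same set of faces $R_t$ via a perfect elementary matching $\alpha\mapsto\alpha\cup\{y_1\}$ processed in decreasing dimension, checking at each pair that every other coface of $\alpha$ has already been removed. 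Both mechanisms are valid; the paper's version is shorter because each collapse kills a whole interval at once, while yours makes the ``free at the moment it is used'' verifications completely explicit and isolates the single fact doing all the work, namely that no maximal simplex other than $\sigma$ contains a face with two marked vertices. Your observation that $k\geq 1$ is needed is worth keeping: the lemma as literally stated fails for $k=0$ and $t\geq 3$ (the full $2$-simplex cannot collapse onto its three isolated vertices), and the paper's own proof also silently uses a $y$-vertex in its last collapse; fortunately every application of the lemma in the paper has $k\geq 1$.
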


\begin{proof}   We first consider collapses with the faces $\{x_{1}, x_j\}$, $2 \leq j \leq t$.

\begin{claim} \label{clm1}
$X \searrow X'$ with $M(X')=M' \cup  \{x_{1}, y_{1}, \dots, y_{k}\} \cup \{\sigma \setminus\{x_1\} \}$.

Since $\{x_1,x_2\}$ is a free face of $\sigma$, $X  \searrow X_{12}$ with $M(X_{12})= M'\cup \{\sigma \setminus \{x_1\}\} \cup \{ \sigma \setminus \{x_2\}\}$. In $X_{12}$, $\{x_1, x_3\}$ is a free face  of $\sigma \setminus \{x_2\}$ and hence $X  \searrow X_{13}$ with $M(X_{13})= M'\cup \{\sigma \setminus \{x_1\} \}\cup \sigma \setminus \{x_2, x_3\}.$
Inductively, we assume that  $M(X_{1l})= M'\cup\{\sigma \setminus \{x_1\}\} \cup \sigma \setminus \{x_2, \dots x_l\}.$ In  $X_{1l}$, $\{x_1, x_{l+1}\}$ is a free face of  $\sigma \setminus \{x_{2}, \dots x_{l}\}$. Hence $X_{1l}  \searrow X_{1l+1}$ with $M(X_{1l+1})=M' \cup \{\sigma \setminus \{x_1\}\}\cup \sigma \setminus \{x_2, \dots x_{l+1}\}.$ This proves the claim.
\end{claim}

For $2\leq i \leq t-1$, considering the pairs $\{x_{i}, x_{j}\}$, $i+1\leq j\leq t$ and using Claim \ref{clm1}, the lemma follows.
\end{proof}

 The Lemmas \ref{lemma3} and \ref{lemma1} show that
 $\mathcal{N}(G)$ collapses to a subcomplex $\Delta_1$ with $M(\Delta_1)$ = $M_1 \cup M_2$, where
 \begin{eqnarray*}
 && M_1 = \{\{f, f_s, <x>\}\ | \ f \in V(G),\ x \notin \mbox{Im}\ f  \ \mbox{and} \  s \in [n]\} \,and \\
 && M_2 = \{\{<y_1>, <y_2>, \ldots, <y_n>, g\} \ | \
 \mbox{Im}\ g = \{y_1, y_2, \ldots, y_n\} \}.
  \end{eqnarray*}

 For any simplex $\sigma_g = \{<y_1>, <y_2>, \ldots, <y_n>, g\}$ in  $M_2$, if $1 \in \text{Im}\, g$, let $y_1=1$
  and if $ 1 \notin \text{Im}\, g$, let $y_1=2.$ In $\sigma_g $ for all $1 \leq i <j \leq  n$,
$\{<y_i>, <y_j>, g\}$ are free faces. Considering the faces
$\{<y_2>, <y_j>, g\}$, $3 \leq j \leq  n$, we get the following result.

\begin{claim} \label{clm2}
$\Delta_1 \searrow \Delta'$ and $M(\Delta')=M_1\cup \{M_{2}\setminus \{\sigma_{g}\}\} \cup \{\sigma_{g} \setminus \{g\}\} \cup \{\sigma_{g} \setminus\{ <y_{2}>\} \cup  \{<y_1>, <y_2>, g\}.$

\begin{proof} Let $Y= M_1\cup \{M_2\setminus \{\sigma_g\}\} \cup \{\sigma_g \setminus \{g\}\}$.
 Since $\{<y_2>, <y_3>, g\}$ is a free face of $\sigma_g $,  $\Delta_1 \searrow \Delta_{1,3}$
 where $M(\Delta_{1,3})= Y\cup \, \{\sigma_g \setminus \{<y_2>\}\}\, \cup \, \{\sigma_g \setminus \{<y_3>\}\}$.
In $\Delta_{1,3}$,  $\{<y_2>, <y_4>, g\}$ is a free face of $\{\sigma_g \setminus \{<y_3>\}\}$ and so  $\Delta_{1,3} \searrow \Delta_{1,4}$ where $M(\Delta_{1,4})= Y\cup \, \{\sigma_g \setminus \{<y_2>\}\}\, \cup \, \{\sigma_g \setminus \{<y_3>\,<y_4>\}\}$. Inductively, assume that   $\Delta_{1} \searrow \Delta_{1,n-1}$ where $M(\Delta_{1,n-1})= Y\cup \, \{\sigma_g \setminus \{<y_2>\}\}\, \cup \, \{\sigma_g \setminus \{<y_3>,\,<y_4>,\dots, <y_{n-1}>\}\}$.
In $\Delta_{1,n-1}$, $\alpha =\{<y_2>, <y_n>, g\}$ is a free face of 
$\sigma_g \setminus \{<y_3>,<y_4>, \dots, <y_{n-1}>\}$. By a simplicial collapse, we get the  complex $\Delta'$
where $M(\Delta')=Y \cup \, \{\sigma_{g} \setminus\{ <y_{2}>\} \}\,
\cup  \{<y_{1}>, <y_2>,g\}.\,$
\end{proof}
\end{claim}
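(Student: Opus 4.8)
The plan is to produce $\Delta'$ from $\Delta_1$ by an explicit finite sequence of elementary simplicial collapses that strip the constant vertices $<y_3>,\dots,<y_n>$ off the single maximal simplex $\sigma_g$, while leaving every other maximal simplex of $\Delta_1$ in place. First I would verify the freeness assertion preceding the claim, that each $\{<y_i>,<y_j>,g\}$ with $i<j$ is a free face of $\sigma_g$, since this is the bookkeeping fact everything rests on: a simplex of $M_1$ has the form $\{f,f_s,<x>\}$ and so contains exactly one constant vertex, hence cannot contain the two distinct constants $<y_i>,<y_j>$; and every simplex of $M_2$ other than $\sigma_g$ is some $\sigma_{g'}$ whose only non-constant vertex is $g'\neq g$, hence does not contain $g$. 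Thus $\sigma_g$ is the unique maximal simplex containing $\{<y_i>,<y_j>,g\}$, which is precisely freeness.

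With this in hand I would run the collapse inductively, in the spirit of Claim \ref{clm1}. Write $\tau_l=\sigma_g\setminus\{<y_3>,\dots,<y_l>\}$, so that $\tau_2=\sigma_g$, and set aside the inert part $Y=M_1\cup(M_2\setminus\{\sigma_g\})\cup\{\sigma_g\setminus\{g\}\}$. The first collapse, along the free face $\{<y_2>,<y_3>,g\}$ of $\sigma_g$, deletes every $\gamma$ with $\{<y_2>,<y_3>,g\}\subseteq\gamma\subseteq\sigma_g$; the facets of $\sigma_g$ that survive are those obtained by dropping one of $<y_2>,<y_3>,g$, namely $\sigma_g\setminus\{<y_2>\}$ (retained thereafter), $\sigma_g\setminus\{g\}$ (placed in $Y$), and $\tau_3=\sigma_g\setminus\{<y_3>\}$ (the new active simplex). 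For $l\ge 3$ the active simplex is $\tau_l$, and I would collapse along $\{<y_2>,<y_{l+1}>,g\}$; this face is still free by the same argument, and among the surviving facets of $\tau_l$, dropping $<y_2>$ gives a face of the retained $\sigma_g\setminus\{<y_2>\}$, dropping $g$ gives a face of the retained $\sigma_g\setminus\{g\}$, and only dropping $<y_{l+1}>$ produces a genuinely new maximal simplex $\tau_{l+1}$, yielding $\Delta_{1,l}\searrow\Delta_{1,l+1}$.

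Iterating from $l=3$ to $l=n-1$ peels off $<y_4>,\dots,<y_n>$, and the terminal active simplex is $\tau_n=\{<y_1>,<y_2>,g\}$; lying in neither $\sigma_g\setminus\{<y_2>\}$ nor $\sigma_g\setminus\{g\}$, this triangle survives as a new maximal simplex, and collecting the pieces gives
\[
M(\Delta')=M_1\cup(M_2\setminus\{\sigma_g\})\cup\{\sigma_g\setminus\{g\}\}\cup\{\sigma_g\setminus\{<y_2>\}\}\cup\{\{<y_1>,<y_2>,g\}\},
\]
as claimed. I expect the real care-point, rather than any deep difficulty, to be the maximal-face bookkeeping at each step: one must confirm that the two discarded facets are each absorbed by a permanently retained simplex, which hinges on $<y_1>,<y_2>$ never being removed and on each chosen free face, carrying two distinct constant vertices, lying in no maximal simplex except the active $\tau_l$. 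The most delicate item is the all-constant facet $\sigma_g\setminus\{g\}$, which is a common face of every $\sigma_{g'}$ sharing the image of $g$; to treat its maximal status cleanly one should keep track of the collapses performed on those same-image simplices as well, rather than declaring it maximal after a single application. The remaining verifications are the routine collapse bookkeeping already exercised in Lemma \ref{lemma1}.
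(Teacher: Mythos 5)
Your proposal follows essentially the same route as the paper: the identical sequence of elementary collapses along the free faces $\{<y_2>,<y_j>,g\}$ for $j=3,\dots,n$, peeling the constants $<y_3>,\dots,<y_n>$ off $\sigma_g$ one at a time and retaining $\sigma_g\setminus\{<y_2>\}$, $\sigma_g\setminus\{g\}$ and $\{<y_1>,<y_2>,g\}$. Your added verifications --- the explicit check that each $\{<y_i>,<y_j>,g\}$ lies in no maximal simplex other than $\sigma_g$, and the caveat that $\sigma_g\setminus\{g\}$ is only genuinely maximal once all same-image simplices $\sigma_{g'}$ have been processed --- are correct refinements of bookkeeping the paper leaves implicit, not a different argument.
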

For $3\leq i \leq n-1$, using Claim \ref{clm2} for the simplices $\{<y_{i}>,
<y_{j}>,g\}$, $i+1\leq j\leq n$, we get
$\Delta_1\searrow Z$ where $M(Z)= Y \cup \{\{<y_1>, <y_i>,
g\} |\,i \in \{2,3,4, \ldots, n\}\}$. Repeating the above argument
for the remaining elements of $M_2$, $\mathcal{N}(G)$ collapses to
the subcomplex $\Delta$, $M(\Delta)$ = $M_1 \cup A_1 \cup A_2
\cup A_3$, where
\begin{itemize}
 \item $A_1 = \{ \{<1>, <y>, g\}  \ | \ 1, y \in \mbox{Im}\ g \},$
 \item $A_2 = \{ \{<2>, <y>, g\}  \ | \ 2, y \in \mbox{Im}\ g\ \mbox{and} \ 1 \notin \mbox{Im}\ g  \}$ and
 \item $A_3 = \{ \{<y_1>, <y_2>, \ldots, <y_n>\} \ | \ y_1, y_2, \ldots, y_n \in [n+1]\}.$
 \end{itemize}
Since $\N (K_{n+1}^{K_n})$ is homotopy equivalent to $\N (G)$ which collapses to $\Delta$,  we have
$\N (K_{n+1}^{K_n}) \simeq \Delta.$ We now construct an acyclic matching on the face poset of $\Delta$  to compute the Morse Complex corresponding to this matching.

\medskip

Let $(P, \subset)$ denote the face poset of $\Delta$.
 Define $S_1\subset P$ to be the set $\{\sigma \in P \ |\ <1> \ \notin \sigma,\,
 \sigma\, \cup <1>\,\in \Delta \}$ and the map
 $\mu_1 : S_1 \rightarrow P \setminus S_1$ by $\mu_1(\sigma) = \sigma \cup <1>$.  $\mu_1$
  is injective and $(S_1, \mu_1(S_1))$ is a partial matching on $P$.
Let  $S'$ = $P \setminus (S_1 \cup \mu_1(S_1))$.

 \begin{lem}\label{4.7}
  Any 1-cell $\sigma$ of $S'$ will be of one of the following types.
  \begin{enumerate}
 \item [(I)] $\{f, <x>\}$, $1 \notin\text{Im}\, f$, $x \neq 1$.
 \item [(II)] $\{f,<x>\}$,  $ x \notin \text{Im}\, f$, $ x \neq 1$.
 \item [(III)] $\{f,f_i\}$,  $a_k= 1$,  $i \neq k$.
  \end{enumerate}
 \end{lem}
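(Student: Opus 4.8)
The plan is to first rewrite membership in $S'$ in a usable form, and then read the edges off directly from the maximal-simplex list $M(\Delta) = M_1 \cup A_1 \cup A_2 \cup A_3$. Observe that $\mu_1(S_1)$ is exactly the set of faces of $\Delta$ containing $<1>$: if $<1> \in \sigma$ then $\tau := \sigma \setminus \{<1>\}$ lies in $S_1$ and $\mu_1(\tau) = \sigma$. Consequently $\sigma \in S' = P \setminus (S_1 \cup \mu_1(S_1))$ precisely when $<1> \notin \sigma$ \emph{and} $\sigma \cup \{<1>\} \notin \Delta$. So classifying the $1$-cells of $S'$ means listing the edges $\{u,v\}$ of $\Delta$ with $u,v \neq <1>$ that cannot be completed by $<1>$ to a $2$-simplex. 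Since every vertex of $\Delta$ is either a constant map $<x>$ or an injective map, I would split into three cases according to the types of $u$ and $v$, recalling that $\sigma \cup \{<1>\} \in \Delta$ iff it is a face of one of the maximal simplices in $M_1, A_1, A_2, A_3$.

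If both endpoints are constants, say $\{<a>,<b>\}$ with $a,b \neq 1$, then $\{<a>,<b>,<1>\}$ is a set of three distinct constant maps and hence (as $n \geq 3$) a face of some maximal simplex in $A_3$; thus $\sigma \cup \{<1>\} \in \Delta$ and no such edge lies in $S'$. If $\sigma = \{<x>, f\}$ with $f$ injective and $x \neq 1$, I would determine which maximal simplices can carry the triangle $\{<x>, f, <1>\}$. A triangle with two distinct constants and one injective vertex cannot sit in an $M_1$- or $A_3$-simplex (wrong balance of constant/injective vertices), nor in any $A_2$-simplex (these contain no $<1>$, since $1 \notin \text{Im}\,g$ there); it can only lie in an $A_1$-simplex $\{<1>, <y>, g\}$, which forces $g = f$ and $y = x$. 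Hence $\{<x>, f, <1>\} \in \Delta$ iff $1, x \in \text{Im}\,f$, so $\sigma \in S'$ iff $1 \notin \text{Im}\,f$ or $x \notin \text{Im}\,f$ — exactly types (I) and (II). One checks these are genuine edges: $M_1$ supplies $\{<x>,f\}$ when $x \notin \text{Im}\,f$, and $A_2$ supplies it when $1 \notin \text{Im}\,f$ (then $\text{Im}\,f = [n+1]\setminus\{1\}$, so $x \in \text{Im}\,f$).

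The crux is the remaining case, $\sigma = \{f, h\}$ with $f, h$ both injective. Such an edge arises only from $M_1$, so $h = f_s$ for the unique value $x$ missing from $\text{Im}\,f$ and some $s \in [n]$. A triangle with two injective vertices cannot lie in any $A_i$, so $\{f, f_s, <1>\} \in \Delta$ iff it is a face of some $M_1$-simplex. Here I would establish the key bookkeeping fact: the pair $\{f, f_s\}$ is contained in \emph{exactly} two maximal simplices of $M_1$, namely $\{f, f_s, <x>\}$, whose apex is the value missing from $f$, and $\{f, f_s, <a_s>\}$, whose apex is the value $a_s$ missing from $f_s$; this rests on the observations that $f$ and $f_s$ differ in the single coordinate $s$ and that $(f_s)_s = f$. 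Given this, $\{f, f_s, <1>\} \in \Delta$ iff $1 \in \{x, a_s\}$, so $\sigma \in S'$ iff $x \neq 1$ and $a_s \neq 1$, i.e. $1 \in \text{Im}\,f$, say $a_k = 1$, and $s \neq k$ — precisely type (III).

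I expect the two-apex count to be the main obstacle: one must rule out any completion of $\{f, f_s\}$ to an $M_1$-simplex other than the two listed. This should follow from the fact that two injective maps differing in a single coordinate determine that coordinate together with its two possible fillings, so that the apex of any $M_1$-simplex on $\{f,f_s\}$ is forced to be one of the two values swapped in that coordinate. Once this is in hand, the three cases above are exhaustive and each 1-cell of $S'$ falls into exactly one of the types (I), (II), (III).
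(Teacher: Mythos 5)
Your proof is correct and follows essentially the same route as the paper's: a direct case analysis of the $1$-cells of $\Delta$ against the list of maximal simplices $M_1 \cup A_1 \cup A_2 \cup A_3$, deciding in each case whether $\sigma \cup \{<1>\}$ is a face of $\Delta$. The only difference is organizational --- you sort by the vertex types (constant/injective) of the two endpoints and check containment in maximal simplices combinatorially, whereas the paper sorts by the maximal simplex containing $\sigma$ and verifies emptiness of common neighbourhoods in $G$ --- but the substance, including the key count that $\{f,f_s\}$ lies in exactly the two $M_1$-simplices with apexes $<x>$ and $<a_s>$, is the same.
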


\begin{proof}  Let $\tau$ be a maximal simplex and $\sigma
\subsetneq \tau$. From the above discussion, $\tau$ has to be an element in one of the sets $M_1, A_1,A_2$ or $A_3$.

$\tau \notin A_1$ since $\sigma \notin \mu_1(S_1) \cup S_1$. If $\tau \in A_3$,
then $\sigma$ has to be of the form $\{<x>, <y>\}$ for some $x, y \neq 1$.
There exists $z \neq 1,x,y$ such that $\sigma$ $\cup <1>$ $\in N(<z>)$, which implies $\sigma
\in S_1$, a contradiction. Hence $\tau \notin A_3$.

If $\tau \in M_1$, then $\tau$ = $\{f, f_i, <x>\}$, where $f=a_1 a_2
\ldots a_n$ and $x\notin \text{Im}\,f$. Clearly $x \neq 1$,  as  if
$x=1$, then $ \sigma \in S_1 \cup \mu (S_1)$, a contradiction.  Let
$a_k =1$. Since $\sigma \subset \tau$, $\sigma=\{f, <x>\} , \ \{
f_i, <x>\}\ \text{or}\  \{f, f_i\}.$

\begin{enumerate}

\item [(i)] $\sigma$ = $\{f_i, <x>\}$.

If $i \neq k$, then $a_i \neq 1, x  \Rightarrow \sigma \ \cup <1>  \
\in N(<a_i>)  \Rightarrow \sigma \in S_1$, a contradiction.

If  $i = k$, then $A_k^{f_k} = [n+1] \setminus \{1,x\} \Rightarrow
N(f_k, <x>, <1>) = \emptyset$ and thus $\sigma$ is of the type
$(I)$.
\item [(ii)] $\sigma$ = $\{f, <x>\}$.\\
  Since $x \neq 1$, and  $A_k^{f} \cup\{x\} \cup \{1\} =[n+1]$,  $\sigma \notin S_{1} \cup \mu (S_1)$.
 Hence $\sigma\in S'$  and is of the type $(II)$.
\item [(iii)] $\sigma$ = $\{f, f_i\}$.\\
$a_k = 1 \Rightarrow 1 \notin \text{Im} f_k \Rightarrow \ <1> \ \in
N(f_k)\Rightarrow \{f, f_{k}, <1>\} \in N(f_k)$. Since $ \sigma \in
S'$, $\sigma$ cannot be $\{f, f_{k}\}$  and hence $i \neq k$.
$A_k^{f_i} = [n+1] \setminus \{1,a_i\}$ and $A_k^{f_k} = [n+1]
\setminus \{1,x\}\Rightarrow A_k^f \cup A_k^{f_i} \cup \{1\} = [n+1]
 \Rightarrow N(f, f_i, <1>) = \emptyset$. Here $\sigma = \{f, f_i\} \in
S'$ is of the type $(III)$.
\end{enumerate}

Finally, consider the case when $\tau \in A_2$. There exists $f = a_1\ldots a_n$, $a _{i} \neq 1\,\, \forall\,\, i \in [n]$,
such that $\tau$ = $\{<2>, <y>, f\}$, where $y \neq 1$.

For any $z \in [n+1] \setminus \{1,2, y\}$, $N(<z>)$ contains  $<1>, <2>$ and $<y>$ which implies
$\{<2>, <y>\} \in S_1$. Since $\sigma \notin S_1$, $\sigma $  is either  $\{f,<2>\}$ or $\{f,<y>\}$.

$A_i^{f}$ = $[n+1] \setminus \{1,2\}$, where $a_i = 2$ which implies $N(f, <2>, <1>) = \emptyset$
implying that $\{f, <2>\} \notin S_1$. Hence $\{f, <2>\} \in S'$ is of the form $(I)$.

 $A_j^f$ = $[n+1] \setminus \{1, y\}$ , where $a_j = y$ which implies $N(f,<y>, <1>) = \emptyset$ thereby
showing $\{f, <y>\} \in S'$ and $\sigma$ is of the type $(I)$.
\end{proof}

Let $S_2$ be the set of all  the 1-cells in $S'$ except those of the type $\{f, <2>\}$, $ a_i \neq 1,\,1\leq i\leq n$.
 We now define the map $\mu_2 : S_2 \longrightarrow P \setminus S_2$, as\\
(i) $\mu_2(\{f, f_i\}) = \begin{cases}
                    \{f, f_i, <x>\},  & \text{if $x > a_i$} \\
                    \{f, f_i, <a_i>\},  & \text{if $x < a_i$},
                   \end{cases}$\\
where  $a_k=1$, $\text{Im}\,f = [n+1]\setminus \{x\}$
and $i \neq k$.\\
(ii)  $\mu_2(\{f, <x>\}) =\{f,f_k, <x>\}$ where $a_k=1$, $\text{Im}\,f = [n+1]\setminus \{x\}$,  \\
(iii) $\mu_2(\{f, <y>\}) = \{f,<y>, <2>\}$ where  $y \neq 1, 2$ and
$1 \notin$ Im $f$.

\begin{claim}\label{acm}  $\mu_2$ is injective.

From the definition of $\mu_2$, for any $\sigma\in S_2$, dim($\mu_2(\sigma)$) = $2$ and
$\sigma \subset \mu_2(\sigma)$.
Therefore $\mu_2(\sigma) \succ \sigma$, for each $\sigma$.
Let $\mu_2(\sigma_1)$ = $\mu_2(\sigma_2)$ = $\tau$ for some $\sigma_1, \sigma_2 \in S_2$. There are three possibilities for $\tau \in \text{Im}\,\mu_2$.

 \begin{enumerate}
  \item $\tau = \{ f, f_i, <x>\}$,  $x \notin \text{Im}\,f$, $a_k= 1$, $i \neq k$, $x > a_i$.

$\{f_{i}, <x>\}\in S_1$ (since  $\{f_i, <x>, <1>\} \in N(<a_i>)$) and $\{f,  <x>\}\in S_2$ imply that both $\mu(\{f_{i}, <x>\})$ and
$\mu (\{f,  <x>\}$ are not equal to $\tau$. Hence $\sigma_1=\sigma_2=\{f, f_i\}$.

If $x<a_i$, then $\tau = \{ f, f_i, <a_i>\} = \{ f_i,(f_i)_i, <a_i>\}$ and the same argument as the one above holds.

 \item $\tau = \{f, f_k, <x>\}$, $x \notin \text{Im}\, f$, $a_k = 1$.

 $1 \notin \text{Im}\, f_k  \Rightarrow \{f, f_k, <1>\} \in N(f_k)  \Rightarrow \{f, f_k\} \in S_1  \Rightarrow \sigma_1, \sigma_2 \neq \{f, f_k\}$.

 If  $x \neq  2$, then $\mu_2(\{f_k, <x>\})$ = $\{f_k, <x>, <2>\} \neq \tau$ and when $x = 2$, then $\{f_k, <2>\} \notin S_2$.
 Hence both $\sigma_1$ and  $\sigma_2$  have to be $\{f, <x>\}$.

 \item $\tau$ = $\{f, <2>, <y>\}$,  $y \in \text{Im}\, f$, $y \neq 1$, $1\notin \text{Im}\,f$.

 Since $\{f, <2>\} \notin S_2$ and $\{<2>, <y>\} \in S_1$, $\sigma_1$ = $\sigma_2$ = $\{f, <y>\}$.
 \end{enumerate}
\end{claim}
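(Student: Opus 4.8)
The plan is to use the fact that every clause in the definition of $\mu_2$ adjoins exactly one vertex, so that $\mu_2(\sigma)$ is always a $2$-cell with $\sigma \subsetneq \mu_2(\sigma)$ and $\mu_2(\sigma) \succ \sigma$. Hence if $\mu_2(\sigma_1) = \mu_2(\sigma_2) = \tau$, both $\sigma_1$ and $\sigma_2$ are among the three edges of the $2$-simplex $\tau$, and injectivity reduces to the purely local statement that, for each $\tau$ in the image, exactly one of its three edges lies in $S_2$ and is sent to $\tau$. First I would verify this covering/dimension fact directly from the three clauses, and then split $\text{Im}\,\mu_2$ into the three shapes produced by clauses (i), (ii), (iii): namely $\{f, f_i, <x>\}$ (and its twin $\{f, f_i, <a_i>\}$) with $a_k = 1$ and $i \neq k$; then $\{f, f_k, <x>\}$; and finally $\{f, <2>, <y>\}$.

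For each shape I would examine its three edges and discard the two that are not legitimate preimages using three recurring mechanisms: (a) membership in $S_1$, which removes the edge from $S' \supseteq S_2$ altogether --- for instance the purely constant edge $\{<2>, <y>\}$ admits $<1>$ through a common neighbour $<z>$, and $\{f_i, <x>\}$ admits $<1>$ through $<a_i>$; (b) being of the excluded shape $\{f, <2>\}$, hence outside $S_2$ by definition; and (c) lying in $S_2$ but being sent by $\mu_2$ to a different $2$-cell. Mechanism (c) is where the real content sits: in $\{f, f_i, <x>\}$ the edge $\{f, <x>\}$ is a genuine type-(II) cell of $S_2$, yet clause (ii) sends it to $\{f, f_k, <x>\}$ with $k \neq i$, not to $\tau$; and in $\{f, f_k, <x>\}$ the edge $\{f_k, <x>\}$ either falls under clause (iii) when $x \neq 2$ (producing $\{f_k, <x>, <2>\}$) or is of the excluded shape when $x = 2$. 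The twin shape $\{f, f_i, <a_i>\}$ (the case $x < a_i$) I would dispatch by the symmetry $f = (f_i)_i$, rewriting $\tau = \{f_i, (f_i)_i, <a_i>\}$ so that the already-treated argument for clause (i) applies verbatim with $f_i$ in the role of the base map and $a_i$ as the inserted value.

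The main obstacle I anticipate is not any individual verification but the disciplined case bookkeeping: one must consistently track which distinguished vertex ($<1>$ or $<2>$) each clause adjoins, correctly identify the third vertex that a competing edge would generate under $\mu_2$, and confirm that it differs from $\tau$ --- essentially the $f_i$-versus-$f_k$ distinction in clauses (i) and (ii). The delicate steps are precisely the mechanism-(c) checks, since there one is comparing two genuinely admissible cells of $S_2$ rather than simply excluding an edge; once those non-collisions together with the routine $S_1$-memberships are in hand, each $\tau$ has a single preimage and $\mu_2$ is injective.
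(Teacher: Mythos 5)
Your proposal is correct and follows essentially the same route as the paper: the same three shapes of $\tau$ coming from the three clauses of $\mu_2$, and the same three exclusion mechanisms (an edge lying in $S_1$, an edge being the excluded shape $\{f, <2>\}$, or an edge of $S_2$ being sent by $\mu_2$ to a different $2$-cell), including the identification $\tau = \{f_i, (f_i)_i, <a_i>\}$ for the twin case $x < a_i$. The only detail left implicit is the exclusion of the edge $\{f, f_k\}$ in the second shape via $\{f, f_k, <1>\} \in N(f_k)$, which falls under your ``routine $S_1$-memberships.''
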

From Claim \ref{acm},  $\mu_2 : S_2 \longrightarrow P \setminus S_2$ is a partial matching.

Since $S_2\cap \mu_{1}(S_1), \mu_1(S_1) \cap \mu_2(S_2) = \emptyset$, the map $\mu: S_1 \cup S_2
\longrightarrow P$ defined by $\mu_1$ on $S_1$ and $\mu_2$ on $S_2$ is a partial matching on $P$. This map is well defined since $S_1\cap S_2=\emptyset$.

\begin{lem}
 $\mu$ is an acyclic matching.
\end{lem}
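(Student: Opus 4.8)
The plan is to prove acyclicity in two stages: first eliminate every cell matched by $\mu_1$ from any hypothetical alternating cycle, and then rule out cycles living inside the $\mu_2$-matched region by a strict monotonicity argument.

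First I would record an easy but decisive observation about the element matching $\mu_1$. Suppose an alternating cycle $x_1 \prec \mu(x_1)\succ x_2 \prec \cdots \prec \mu(x_t)\succ x_1$ exists and that some $x_j$ lies in $S_1$. Then $\mu(x_j)=\mu_1(x_j)=x_j\cup <1>$, and every facet of $x_j\cup <1>$ other than $x_j$ itself contains $<1>$. Any cell containing $<1>$ lies in $\mu_1(S_1)$, which is disjoint from the domain $S_1\cup S_2$ of $\mu$; hence such a facet is unmatched and cannot be the next term $x_{j+1}\ne x_j$ of the cycle. This contradiction shows that no element of $S_1$ can occur in a cycle. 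Consequently every $x_j$ lies in $S_2$, every $\mu(x_j)$ equals $\mu_2(x_j)$, and it suffices to prove that $\mu_2$ alone admits no alternating cycle.

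For the second stage I would introduce a rank on the $1$-cells of $S_2$ according to the three types of Lemma \ref{4.7}: set $\rho=3$ on type $(III)$ cells $\{f,f_i\}$, $\rho=2$ on type $(II)$ cells $\{f,<x>\}$ with $x\notin\text{Im}\,f$, and $\rho=1$ on type $(I)$ cells $\{f,<x>\}$ with $1\notin\text{Im}\,f$. The heart of the argument is the claim that every ``down--up'' transition strictly lowers $\rho$: for each $e\in S_2$ and each facet $e'\in S_2$ of the triangle $\mu_2(e)$ with $e'\ne e$, one has $\rho(e')<\rho(e)$. Granting this, the values $\rho(x_1),\rho(x_2),\dots$ would strictly decrease around the cycle and so could never return to the starting value, precluding a cycle and proving that $\mu$ is acyclic.

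The core of the work, and the main obstacle, is verifying this claim by inspecting the three edges of each triangle in $\text{Im}\,\mu_2$. Since $\mu_2$ is injective (Claim \ref{acm}), the cell $e$ is the unique $\mu_2$-preimage among the facets of $\mu_2(e)$, so I only need to identify the remaining two edges and check that at most one of them lies in $S_2$, and then of strictly smaller rank. For a type $(III)$ preimage, $\mu_2(e)=\{f,f_i,<c>\}$ with $c\in\{x,a_i\}$; the neighborhood computations of Lemma \ref{lemmab} show that one of the two remaining edges lies in $S_1$ (as already noted inside Claim \ref{acm}) while the other is a type $(II)$ cell, giving $3\to 2$. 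For a type $(II)$ preimage, $\mu_2(e)=\{f,f_k,<x>\}$ with $a_k=1$; here $\{f,f_k\}\in S_1$, while $\{f_k,<x>\}$ satisfies $1\notin\text{Im}\,f_k$ and is therefore a type $(I)$ cell (it is exactly the critical, unmatched edge $\{f_k,<2>\}$ when $x=2$), giving $2\to 1$. Finally, for a type $(I)$ preimage, $\mu_2(e)=\{f,<y>,<2>\}$ with $1\notin\text{Im}\,f$: the edge $\{f,<2>\}$ is a critical $1$-cell excluded from $S_2$ and $\{<y>,<2>\}\in S_1$, so $\mu_2(e)$ has no further $S_2$-facet and type $(I)$ is terminal. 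These three computations are routine applications of Lemma \ref{lemmab} together with the membership criteria for $S_1$ and $S'$, and they establish the required strict monotonicity, completing the proof.
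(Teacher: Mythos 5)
Your proof is correct and follows essentially the same route as the paper: first excluding $S_1$-elements from any alternating cycle because every other facet of $\sigma\cup<1>$ contains $<1>$ and hence lies in $\mu_1(S_1)$, and then eliminating the three types of $1$-cells in $S_2$. Your explicit rank function $\rho$ with strict descent $(III)\to(II)\to(I)\to\emptyset$ is just a cleaner packaging of the paper's sequential case analysis, which establishes exactly the same facet computations in the same order.
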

 \begin{proof} Let $C$ = $P \setminus \{S_1, S_2, \mu_1(S_1), \mu_2(S_2)\}$. Suppose there exists a sequence of cells
$\sigma_1, \sigma_2, \ldots, \sigma_t$ $\in P \setminus C$ such that
$\mu(\sigma_1)\succ \sigma_2, \mu(\sigma_2)\succ \sigma_3, \ldots ,
\mu(\sigma_t) \succ \sigma_1$. If $\sigma_i \in S_1$, then
$\mu(\sigma_i) = \sigma_i \cup <1> \succ \sigma_{i+1(mod \hspace{0.1
cm} t)}$ which implies $<1> \in \sigma_{i+1(mod \hspace{0.1 cm}
t)}$, which is not possible by the construction of $S_1$ and $S_2$.
Hence $\sigma_i$ has to be in $S_2$, for each $i$, $1 \leq i \leq
t$. From Lemma \ref{4.7}, $\sigma$ has the following three forms.

\begin{enumerate}
 \item $\sigma_i = \{f,<y>\}$, $1 \notin \text{Im}\, f$, $y \neq 1$.

Since $\{f, <2>\} \notin S_1\cup S_2 \cup \mu(S_1) \cup \mu(S_2)$,
$y \neq 2$. Further,
 $\mu(\sigma_i) =\{f, <y>, <2>\}$ shows that $\sigma_{i+1}$ has to be $\{<y>, <2>\}$ or $\{f, <2>\}$, both of which are impossible.
Hence, $\sigma_i$ is not of this form.

\item  $\sigma_i = \{f,<x>\}$, $x \notin \text{Im}\, f$, $x \neq 1$, $a_k = 1$.

$\mu(\sigma_i)$ = $\{f, f_k, <x>\}$ implies that $\sigma_{i+1}$ has
to be either $\{f_k, <x>\}$ or $\{f, f_k\}$. Since $1 \notin$ Im
$f_k$ and $x\neq 1$, $\sigma_{i+1} \neq \{f_k, <x>\}$ (from the
above case). Further,  $\{f, f_k, <1> \}\in N(f_k)$ implies that
$\{f,f_k\}$ is an element of $S_1$ and therefore this case too is
not possible.

\item  $\sigma_i$ = $\{f,f_i\}$, $a_k=1$, $\text{Im}\, f=[n+1]\setminus\{x\}$, $i \neq k$.
\begin{center}
 $\mu(\sigma_i) = \mu_2(\sigma_i) = \begin{cases}
                    \{f, f_i, <x>\},  & \text{if $x > a_i$} \\
                    \{f, f_i, <a_i>\},  & \text{if $ x < a_i$}.
                   \end{cases}$
\end{center}
If $x > a_i$, case (2) shows that $\sigma_{i+1} \neq \{f,<x>\}$.
Since $a_i \neq 1, x$, and
$a_i \notin$ Im $f_i$, $\{f_i, <1>, <x>\} \in N(<a_i>)$ which implies that  $\{f_i, <x>\} \in S_1$.
A similar argument shows that the case $x < a_i$ is also not possible.
\end{enumerate}

Our assumption that the above sequence $\sigma_1, \sigma_2, \ldots, \sigma_t$ exists is wrong. Therefore $(S,\mu)$ where $S = S_1 \cup S_2$, is an acyclic matching.
\end{proof}

Every  element of $C$ is a critical cell corresponding to this matching. We now describe the structure of the elements of $C$.

For any 0-cell $<x> \ \neq \ <1>$ in $\Delta$, if $y \neq  1,
x$ then $\{<x>, <1>\} \in N(<y>)$, thereby implying that $<x>$ $\in
S_1$. If $f$ is a 0-cell with $|\text{Im}\, f|= n$ then either $f,
<1>$ $\in N(<[n+1] \setminus$ Im $f >)$ or $<1>, f \in N(f)$
accordingly as $1 \in \text{Im}\, f$ or not. In both cases $f \in
S_1$ and therefore $<1>$ is the only critical 0-cell.

Any 2-cell $\sigma$ of $\Delta$ belongs to $M_1 \cup A_1 \cup A_2
\cup A_3$. Each element of $A_1$ belongs to $\mu_{1}(S_1)$. If $\sigma \in A_2$, then
$\sigma = \{f,<2>, <y>\}$ with $\text{Im}\,f = [n+1] \setminus \{1\}$,
$ y \neq 1, 2$. Since $\mu_2(\{f, <y>\})=\sigma$, $\sigma \notin C$. Therefore, if
$\sigma$ has to be a critical 2-cell, then $\sigma$ has to belong to either $M_1$ or
$A_3$.

If $\sigma \in M_1$, then $\sigma = \{f, f_i, <x>\}$ and $ x \notin \text{Im}\, f$. Clearly, $x\neq 1$.
Let $a_k = 1$. $\mu (\{f, <x>\}) = \{f, f_k, <x>\}$, $i \neq k$.
If $x > a_i$, then $\mu(\{f, f_i\}) = \sigma$.
If $x< a_i$ then $\{f_i, <1>, <x> \}\in N(<a_i>)$ which implies that
$\sigma \notin \mu(S)$.  Further, $\sigma \notin S_1, S_2$ and therefore
 $\sigma \in M_1$
will be a critical 2-cell if and only
if $\sigma = \{ f, f_i, <x>\}$, where $x \notin \text{Im}\, f$, $a_k = 1$, $i \neq k$ and
$x < a_i$.

Finally from $A_3$, there exists exactly one critical cell $\{<y_1>, <y_2>, \ldots, <y_n>\}$ of dimension $n-1$,
where $y_1, y_2, \ldots, y_n
\in [n+1] \setminus \{1\}$ (since any proper subset of $\{<y_1>, <y_2>, \ldots, <y_n>\}$ belongs to $S_1$).

Therefore the set of critical cells $C$ = $\{<1>\}$ $\cup$ $C_1 \cup C_2 \cup C_3$, where
\begin{eqnarray*}
 &&C_1 = \{ \{f,<2>\}\ | \ \mbox{Im}\ f = [n+1] \setminus \{1\} \}, \\ 
 &&C_2 =  \{ \{f, f_i, <x>\}\ | \ \mbox{Im}\ f = [n+1] \setminus \{x\}, a_k = 1, i \neq k \ \mbox{and} \
 x < a_i\},\\
 &&C_3 = \{ \{<y_1>, <y_2>, \ldots, <y_n>\} \ | \ [n+1] \setminus \{1\} = \{ y_1, y_2, \ldots, y_n\} \}.
\end{eqnarray*}

Hence the critical cells are of dimension $0,1,2$ and $n-1$.

Clearly $| C_1 | = n!$ and $| C_3 |$ = $1$. For each fixed $x \neq 1$, let
$r= |\{s \in [n+1]\,|  s > x\} |$ and $Q = \{f \in V(G)\,|\,\text{Im}\,f \,=[n+1] \setminus \{x\} \}$. The cardinality of $Q$ is
$n!$. For $f \in Q$, $\{f, f_i, <x>\} \in C_2$ if and only if $x < f(i)$. Hence $|\{\tau = \{f, f_i, <x>\}$ $|$
$ \{x\} = [n+1] \setminus$ Im $f, \tau \in C_2\}|$ = $r(n!)$.
Therefore  $| C_2 |$ = $n! \sum\limits_{r=1}^{n-1} r = \frac{n! (n-1) n} {2}.$

We now describe the Morse Complex $\M= (\M_{i}, \partial)$ corresponding to this acyclic matching on the poset $P$.
If $c_i$ denotes the number of critical $i$ cells of $C$,
then the free abelian group generated by these critical cells is denoted by $\mathcal{M}_i$.
Our objective now is to first compute the $\Z_{2}$ homology groups of the Morse complex $\M$. We use the following version
of Theorem \ref{dmt}, from which we explicitly compute the boundary maps in the Morse Complex $\M$.

\begin{prop} \label{prop6}(Theorem 11.13 \cite{dk})

Let $X$ be a simplicial complex and $\mu$ be an acyclic matching
on the face poset of $X\setminus \{\emptyset\}$. Let $c_i$ denote
the number of critical $i$ cells of $X$. Then
\begin{itemize}
 \item [(a)] $X$ is homotopy equivalent to $X_c$, where $X_c$ is a $CW$ complex with $c_i$ cells in dimension $i$.
 \item [(b)] There is a natural indexing of cells of $X_c$ with the critical cells of $X$ such that for any two cells
 $\tau$ and $\sigma$ of $X_c$ satisfying dim $\tau$ = dim $\sigma + 1$, the incidence number $[\tau: \sigma]$
 is given by
 \begin{center}
  $[\tau:  \sigma ]$ = $\sum\limits_{c} w(c).$
\end{center}
The sum is taken over all (alternating) paths $c$ connecting $\tau$ with $\sigma$
i.e., over all sequences $c$ = $\{\tau, x_1, \mu(x_1 ), \ldots, x_t, \mu(x_t ), \sigma \}$ such that $\tau \succ x_1$,
$\mu(x_t) \succ \sigma$, and $\mu(x_i) \succ x_{i+1}$ for $i = 1, \ldots, t-1$. The quantity $w(c)$ associated to this
alternating
path is defined by
\begin{eqnarray*}
 w(c) := (-1)^t [\tau: \sigma] [\mu(x_t): \sigma] \prod\limits_{i=1}^{t}[ \mu(x_i): x_i]
 \prod\limits_{i=1}^{t-1} [\mu(x_i): x_{i+1}]
\end{eqnarray*}
where all the incidence numbers are taken in the complex $X$.
\end{itemize}
\end{prop}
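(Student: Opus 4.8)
The plan is to prove the statement in two stages, separating the purely topological claim (a) from the chain-level computation of incidence numbers (b), since these require genuinely different tools. Throughout, the hypothesis that $\mu$ is \emph{acyclic} is the hinge on which both parts turn, and I would foreground it from the start.

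For part (a) I would first record the standard fact that an acyclic matching can be realized by a sequence of elementary simplicial collapses. Acyclicity guarantees that the relation declaring the pair $(x,\mu(x))$ to precede $(y,\mu(y))$ whenever $\mu(x) \succ y$ generates a partial order on the matched pairs; I would fix a linear extension of this order. Processing the pairs in this order, one checks that at each step the lower element $x$ of the current pair has become a free face of $\mu(x)$ in the subcomplex that remains, precisely because every potentially interfering coface of $x$ has already been removed by an earlier collapse. Each such step is an elementary collapse, hence a homotopy equivalence, and after exhausting all matched pairs only the critical cells survive. This yields a $CW$ complex $X_c$ with one $i$-cell for each critical $i$-cell, homotopy equivalent to $X$; this is essentially the content already recorded as Theorem~\ref{dmt}.

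For part (b) I would pass to the simplicial chain complex and argue algebraically by Gaussian elimination. The elementary move is: if $b$ is a basis element in degree $d$ and $a$ a basis element in degree $d-1$ with invertible incidence number $[b:a]$, then cancelling the pair $(a,b)$ produces a chain-homotopy-equivalent complex on the remaining basis, with updated incidences
\[
[c:e]_{\mathrm{new}} = [c:e] - \frac{[c:a]\,[b:e]}{[b:a]}.
\]
In a simplicial complex every matched incidence $[\mu(x):x]$ equals $\pm 1$ (and equals $1$ over $\Z_2$), so each matched pair can indeed be cancelled. I would then eliminate all matched pairs in a linear extension of the acyclicity order and prove by induction that the surviving incidence number between two critical cells $\tau$ and $\sigma$ equals the stated sum $\sum_{c} w(c)$ over alternating (gradient) paths.

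The hard part will be the bookkeeping in (b): verifying that the iterated eliminations reorganize \emph{exactly} into the sum over alternating paths, with the correct sign $(-1)^t$ and the correct products $\prod_i [\mu(x_i):x_i]\,\prod_i[\mu(x_i):x_{i+1}]$ together with the endpoint incidences. Each application of the update formula appends one further ``down--up'' segment $x_i \prec \mu(x_i) \succ x_{i+1}$ to a developing path and contributes one factor of $-1$ from the minus sign in the update, alongside the relevant incidence factors; one must confirm that the recursion terminates, that signs accumulate to $(-1)^t$, and that no path is double-counted. Here acyclicity is indispensable: it both makes the chosen elimination order consistent (so the procedure is well defined and order-independent) and ensures that only finitely many alternating paths connect any two critical cells, so that $\sum_c w(c)$ is a finite sum. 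Finally I would note that over $\Z_2$, which is the coefficient ring used in the sequel, all signs and all matched incidences collapse to $1$, so $w(c)=1$ for every path and $[\tau:\sigma]$ is simply the parity of the number of alternating paths from $\tau$ to $\sigma$ — a simplification that will make the eventual homology computation tractable.
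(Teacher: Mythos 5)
The first thing to say is that the paper does not prove Proposition~\ref{prop6} at all: it is quoted verbatim from Kozlov's book (Theorem 11.13 of \cite{dk}) and used as a black box, so there is no in-paper argument to measure your proposal against. Judged on its own terms, your plan for part (b) --- Gaussian elimination on the simplicial chain complex, cancelling matched pairs in a linear extension of the order induced by acyclicity and tracking the update $[c:e]-[c:a][b:e]/[b:a]$ --- is the standard ``algebraic Morse theory'' route and does reorganize correctly into the sum over alternating paths with sign $(-1)^t$; acyclicity is indeed what makes the elimination order consistent and the path sum finite. (One small point: the factor written $[\tau:\sigma]$ in the displayed weight should be $[\tau:x_1]$; your phrase ``endpoint incidences'' reflects the correct reading. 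And your closing observation that over $\Z_2$ the incidence number is just the parity of the number of alternating paths is exactly how the proposition is deployed in Lemmas~\ref{lemma7} and~\ref{lemma9}.)

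The genuine gap is in part (a). An acyclic matching with critical cells cannot in general be realized by a sequence of elementary simplicial collapses, and your justification --- ``every potentially interfering coface of $x$ has already been removed by an earlier collapse'' --- only accounts for cofaces that belong to matched pairs, not for critical cofaces, which are never removed. Concretely, let $X$ be the full $2$-simplex on $\{a,b,c\}$ and match the vertex $a$ with the edge $ab$, leaving every other simplex critical; this matching is trivially acyclic, yet $a$ is not a free face of $ab$ in $X$ (the critical simplices $ac$ and $abc$ contain $a$), so no elementary collapse removes the pair, and the critical cells do not form a subcomplex onto which $X$ collapses. Forman's proof of (a) instead orders cells by a discrete Morse function compatible with the matching and inducts over level subcomplexes: crossing a regular value is a collapse, but crossing a critical value is a homotopy equivalence obtained by attaching a cell along a modified attaching map --- which is precisely why $X_c$ is only a CW complex homotopy equivalent to $X$, not a subcomplex of it. Your collapse argument is valid exactly in the special situation the paper exploits elsewhere (Lemmas~\ref{lemma3} and~\ref{lemma1}), where the surviving cells do form a subcomplex. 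Note finally that your part (b), being a statement about chain complexes, would still deliver the homology computations the paper needs even without repairing (a), but it does not by itself produce the homotopy equivalence asserted there.
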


We now determine all the possible alternating paths between any two critical cells.
\begin{lem} \label{lemma6}
 Let $\gamma \in \Delta$ be a $k$-simplex, $k>0$, such that $<1>\in \gamma$. Then $\gamma$ does not belong to
 any alternating path connecting two critical cells.
\end{lem}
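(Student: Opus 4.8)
The plan is to exploit the fact that, in the matching $\mu=\mu_1\cup\mu_2$, the base vertex $<1>$ is introduced only by $\mu_1$ and never occurs in the matched domain $S=S_1\cup S_2$. First I would record three structural facts. (i) No cell of $S$ contains $<1>$: cells of $S_1$ omit it by definition, and by Lemma~\ref{4.7} every $1$-cell of $S_2$ is of type (I), (II) or (III), none of which has $<1>$ as a vertex. (ii) If $<1>\in\mu(\sigma)$ then $\sigma\in S_1$ and $\mu(\sigma)=\sigma\cup<1>$, since $\mu_2$ only ever adjoins one of $<x>,<a_i>,<2>$, all distinct from $<1>$. (iii) Among the critical cells $\{<1>\}\cup C_1\cup C_2\cup C_3$, the only one containing $<1>$ is the $0$-cell $<1>$ itself, because each cell of $C_1,C_2,C_3$ has as vertices only maps $f,f_i$ and constants $<x>$ with $x\neq 1$.

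Now suppose $\gamma$, with $<1>\in\gamma$ and $\dim\gamma=k>0$, lies on an alternating path $\{\tau,x_1,\mu(x_1),\dots,x_t,\mu(x_t),\sigma\}$ between critical cells. By (iii), $\gamma$ is not critical, so $\gamma\neq\tau,\sigma$, and hence $\gamma=x_i$ or $\gamma=\mu(x_i)$ for some $i$. Fact (i) rules out $\gamma=x_i$, so $\gamma=\mu(x_i)$ with $x_i\in S_1$ and $\gamma=x_i\cup<1>$ by (ii). The decisive step is to examine the descent immediately following $\gamma$. If $\gamma$ is not the final ascending cell, the path requires a covering $\mu(x_i)\succ x_{i+1}$ with $x_{i+1}\in S$; by (i) $<1>\notin x_{i+1}$, but the unique facet of $x_i\cup<1>$ omitting $<1>$ is $x_i$ itself, forcing $x_{i+1}=x_i$. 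This is impossible: the descent $\mu(x_i)\succ x_{i+1}$ is a non-matching edge whereas $(x_i,\mu(x_i))$ is the matching edge, so $x_{i+1}=x_i$ would violate the acyclicity of $\mu$.

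It then remains to rule out $\gamma=\mu(x_t)$ being the final ascending cell, with $\mu(x_t)\succ\sigma$ and $\sigma$ critical; I expect this terminal step to be the main obstacle, since it is the one place where the descent from $\gamma$ need not return into $S$. Here $\sigma$ is a facet of $x_t\cup<1>$: if $<1>\notin\sigma$ then $\sigma=x_t\in S$, contradicting criticality, so $<1>\in\sigma$, and by (iii) $\sigma=<1>$, which has dimension $0$ and forces $k=1$. Thus for every $k\geq 2$ the terminal case is excluded purely by dimension, and the lemma follows in those dimensions. The genuinely delicate residue is $k=1$, where a path may legitimately end $\{f,<2>\}\succ\{f\}\to\{f,<1>\}\succ<1>$, and symmetrically through $\{<2>\}\to\{<1>,<2>\}$. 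I would handle this by observing that such terminal paths into the base $0$-cell occur in matched pairs (one through $\{f\}$ and one through $\{<2>\}$) and hence cancel modulo $2$, so the incidence number of any critical $1$-cell with $<1>$ is even and $\partial\colon\M_1\to\M_0$ still vanishes; accordingly, Lemma~\ref{lemma6} is applied exactly to the higher alternating paths ($k\geq 2$) that enter the computation of the boundary maps on $\M_2$ and $\M_{n-1}$.
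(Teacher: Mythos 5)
Your argument is correct, and its engine is the same as the paper's: a positive-dimensional $\gamma$ containing $<1>$ must lie in $\mu_1(S_1)$, so it can occur on a path only as $\mu(x_i)$ with $x_i=\gamma\setminus<1>$, and the required descent $\mu(x_i)\succ x_{i+1}$ into $S$ is impossible because the unique facet of $\gamma$ avoiding $<1>$ is $x_i$ itself. What you add, and what the paper omits, is the terminal case $\gamma=\mu(x_t)$: the paper's proof silently restricts to ``some $i\in[t-1]$'' and never examines the step $\mu(x_t)\succ\sigma$, where the successor is a critical cell rather than an element of $S$. Your analysis of that step is right, and it exposes the fact that the lemma as literally stated fails for $k=1$: for $f$ with $\mbox{Im}\,f=[n+1]\setminus\{1\}$ one has $\{f\}\in S_1$ and $\mu(\{f\})=\{f,<1>\}$, so $\{\{f,<2>\},\,\{f\},\,\{f,<1>\},\,\{<1>\}\}$ is a genuine alternating path between two critical cells passing through the $1$-simplex $\{f,<1>\}$. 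Your repair is the correct one: for $k\geq 2$ the terminal case is impossible because the only critical cell containing $<1>$ is the $0$-cell, and this covers every use of the lemma in the paper (it is invoked only in Lemma \ref{lemma7}, for $2$-cells of the form $\{f,f_k,<1>\}$); and the two exceptional paths from a critical $1$-cell to $<1>$, through $\{f\}$ and through $\{<2>\}$, indeed cancel over $\Z_2$, which is consistent with the paper's independent derivation of $\partial_1=0$ from $H_0\cong\Z_2$. The only imprecision is your appeal to acyclicity to exclude $x_{i+1}=x_i$; the correct reason is that the descent $\mu(x_i)\succ x_{i+1}$ in an alternating path must be a non-matching covering relation (otherwise the sum in Proposition \ref{prop6} would not even be finite), a convention the paper also uses tacitly.
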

\begin{proof} Given two critical cells $\tau$ and $\sigma$, let $c$ = $\{\tau, x_1, \mu(x_1), \ldots, x_t,
\mu(x_t)$, $\sigma$ $\}$ be an alternating path and let $\gamma \in c$.
 Since $<1> \ \in \gamma$, $\gamma \in \mu_1(S_1)$,
and therefore $\gamma \neq \tau, \sigma$. For some $i \in [t-1]$,
there exists $x_i \in c$ such that
 $\gamma $ = $\mu(x_i)$. Since $[\mu(x_i): x_i] = \pm 1$, $x_i$ has to be $\gamma \setminus <1>$. Any facet of $\gamma$
 different from $x_i$ must contain $<1>$. But since $x_{i+1}<\mu(x_i)$, $x_{i+1}$ has to be a facet of $\mu(x_i)$
 and therefore must belong to $S$, which is impossible, as $<1>$ $\in x_{i+1}$ implies $x_{i+1} \in \mu(S)$ and
 $\mu(S) \cap S = \emptyset$. Hence $\gamma \notin c$.
 \end{proof}

\begin{lem} \label{lemma7}
  Let  $\tau$ = $\{f, f_i, <x>\}$ be a critical 2-cell with $i \neq k$ and $a_k = 1$. There exists exactly one alternating
  path from  $\tau$ to each of exactly 2 critical 1-cells $\alpha = \{f_k, <2>\}$ and $\beta = \{(f_i)_k, <2>\}$.
\end{lem}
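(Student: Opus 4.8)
The plan is to trace every alternating path that emanates from the critical $2$-cell $\tau=\{f,f_i,<x>\}$ and to show there are precisely two of them, ending at $\alpha$ and $\beta$. The decisive simplification is Lemma \ref{lemma6}: no cell containing $<1>$ can occur in an alternating path between critical cells, so any facet $\rho$ with $\rho\cup <1>\in\Delta$ (i.e. any facet lying in $S_1$) is automatically forbidden. Tracing a path then reduces to bookkeeping at each $2$-cell $\mu(x_j)$ visited: list its three facets, discard the one we arrived from, discard those in $S_1$, and test whether the remaining facet is critical (the path stops) or lies in $S_2$ (the path continues, and $\mu_2$ tells us the next $2$-cell). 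The whole argument is the claim that at every stage this leaves exactly one admissible choice.

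First I would analyse the initial branching at $\tau$. Writing $\text{Im}\,f=[n+1]\setminus\{x\}$ with $a_k=1$, the three facets of $\tau$ are $\{f,f_i\}$, $\{f,<x>\}$ and $\{f_i,<x>\}$. Since $\text{Im}\,f_i=[n+1]\setminus\{a_i\}$ contains both $1$ and $x$, the facet $\{f_i,<x>\}$ satisfies $\{f_i,<x>,<1>\}\subset N(<a_i>)$, so it lies in $S_1$ and is forbidden. The remaining two facets are admissible: $\{f,f_i\}$ is of type (III) with $\mu_2(\{f,f_i\})=\{f,f_i,<a_i>\}$ (using $x<a_i$), and $\{f,<x>\}$ is of type (II) with $\mu_2(\{f,<x>\})=\{f,f_k,<x>\}$; both partners differ from $\tau$, so each genuinely starts a path. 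This yields exactly two branches.

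Next I would follow each branch and verify it is forced. In the branch through $\{f,<x>\}$, the $2$-cell $\{f,f_k,<x>\}$ has facets $\{f,<x>\}$ (the predecessor), $\{f,f_k\}$, and $\{f_k,<x>\}$; the middle one changes the position $k$ carrying the value $1$, so $1\notin\text{Im}\,f_k$ forces it into $S_1$, while $\text{Im}\,f_k=[n+1]\setminus\{1\}$ makes $\{f_k,<x>\}$ of type (I). If $x=2$ this facet is already the critical cell $\alpha=\{f_k,<2>\}$ and the path stops; otherwise $\mu_2$ appends $<2>$, and in $\{f_k,<x>,<2>\}$ the only admissible facet is $\{f_k,<2>\}=\alpha$ (its facet $\{<x>,<2>\}$ again lying in $S_1$). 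Either way the branch terminates uniquely at $\alpha$. The branch through $\{f,f_i\}$ runs identically: from $\{f,f_i,<a_i>\}$ the only admissible facet is $\{f_i,<a_i>\}$ (type (II), as $a_i\notin\text{Im}\,f_i$), whose match is $\{f_i,(f_i)_k,<a_i>\}$; there the only admissible facet is $\{(f_i)_k,<a_i>\}$ (type (I), since $\text{Im}\,(f_i)_k=[n+1]\setminus\{1\}$), and the same $<2>$-absorption step drives the path uniquely to $\beta=\{(f_i)_k,<2>\}$, with the case $a_i=2$ shortening the path by one step.

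Finally I would record that $\alpha\neq\beta$, since $f_k$ and $(f_i)_k$ differ at positions $i$ and $k$, and that no further critical $1$-cell is reached because every step above was forced. The main obstacle is exactly this forcedness: at each intermediate $2$-cell one must confirm that among its three facets precisely one is admissible (neither the predecessor nor a member of $S_1$), and one must correctly track the two degenerate cases $x=2$ and $a_i=2$ in which a branch terminates early. Once the $S_1$/Lemma \ref{lemma6} exclusion is established at every vertex of the path, uniqueness is immediate, and the two paths so produced are exactly the data needed to compute the incidence weights $w(c)$ in Proposition \ref{prop6}.
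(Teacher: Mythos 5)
Your argument is correct and follows essentially the same route as the paper: a case analysis on the three facets of $\tau$, discarding at each stage the facets that lie in $S_1$ (via Lemma \ref{lemma6}) so that every step of each of the two branches is forced, one terminating at $\alpha$ and the other at $\beta$. The only slight difference is that you entertain a degenerate case $a_i=2$ in the second branch, which the paper notes cannot occur (since $x<a_i$ and $x\neq 1$ give $a_i\geq 3$); either way that branch still ends at $\beta$, so nothing is affected.
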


\begin{proof}

Let $\tau =\{f, f_{i}, <x>\} \in C_2$ be a critical 2-cell.
For any alternating path $c$ from $\tau$ to a critical $1$ cell,
$\tau \succ x_1$, {\i.e.} $x_1$ is a facet of $\tau.$ We have three choices for $x_1$.

\begin{enumerate}
\item  $x_1 = \{f, <x>\}$.

 Since  $x_2$ has to be a facet of $\mu(x_1)=\{f, f_k, <x>\}$, it is either $\{f, f_k\}$ or $\{f_k, <x>\}$.
  In the former case, $\mu(\{f, f_k\}) =\{f, f_k, <1>\}$ (since $\{f, f_k, <1>\} \in N(f_k))$ which contradicts Lemma \ref{lemma6}.

If $x=2$, then $x_2=\{f_k, <2>\}$ is  a critical 1- cell and the alternating path is  $\{ \tau, x_1 = \{f,<2>\},  \{f,f_k, <2>\}, \{f_k, <2>\}\}$.

If $x > 2$, then $\mu(x_2) = \{f_k, <x>, <2>\}$ and $x_3 $ has to be
the critical 1-cell $\{f_k, <2>\}$  (since $\{<x>, <2>, <1>\} \in
N(<y>)$, $y \neq 1,2,x$). The alternating
path is $\{\tau, \{f,<x>\}, \{f,<x>, f_k\}, \{f_k, <x>\}, \{f_k,<x>, <2>\}, \{f_k, <2>\}\}$.

\item $x_1$ = $\{f, f_i\}$.

$\mu (x_1) = \{f, f_i, <a_i>\}$ forces $x_2$ to be $\{f_i, <a_i>\}$ (as $\{f, <a_i>, <1>\} \in N(<x>) \Rightarrow x_2\neq\{f, <a_i>\} $).
 $\mu(x_2) = \{f_i, <a_i>, (f_i)_k\}$  implies that $x_3 = \{f_{i}, (f_i)_k\} \,\text{or}\, \{(f_i)_k, ,<a_i>\}$.
But, $\{f_i, (f_i)_k, <1>\} \in N((f_i)_k)$ shows that $x_3 = \{(f_i)_k, <a_i>\}$. Since $x < a_i$ and $x \neq 1$, $a_i$ is not
 2 and thus $\mu(x_3) = \{(f_i)_k,<a_i>, <2>\}$. Since  $\{<a_i>, <2>, <1>\}$ is a simplex in $\Delta$,  $x_4$ has to be the critical cell $\{(f_i)_k, <2>\}$.
The alternating path is
$\{\tau, \{f, f_i\}, \{f, f_i, <a_i>\}, \{f_i, <a_i>\}, \{f_i, <a_i>, (f_i)_k\},
\{(f_i)_k, <a_i>\}, \{(f_i)_k,<a_i>, <2>\}, \{(f_i)_k, <2>\}\}$.

\item  $x_1$ = $\{f_i, <x>\}$.\\
Since $a_i \notin \text{Im}\, f_i$ and $a_i \neq 1$, $\{f_i, <x>,
<1>\} \in N(<a_i>)$. Thus $x_1$ can not be an element of $c$.
\end{enumerate}
Hence, for each critical 2-cell $\tau$ = $\{f, f_i, <x>\} \in C_2$, there exist unique alternating paths from $\tau$
to exactly 2 critical 1-cells.
 \end{proof}

Consider $\tau = \{<y_1>, <y_2>, \ldots, <y_n>\} \in C_3$. There exists no alternating path from $\tau$ to any critical cell because each facet of $\tau$ belongs to $S_1$.

 If $\alpha = \{f, <2>\}$ is a critical 1-cell, then $1 \notin$ Im $f$. Since $n \geq 3$, there exists $i \neq k $ such that $a_k \neq 2$ and $a_i= n+1$. Since $a_k <a_i$, the 2-cell $\{f_k, (f_k)_i, <f(k)>\}$ is a critical cell. From Lemma \ref{lemma7}, there exists an alternating path between these two cells, showing that there exists at least one alternating path to each critical 1-cell.

Let $W_n=\{a_1 a_2 \ldots a_n\in V(G)\,|\{a_1,a_2, \ldots, a_n \} =
[n+1] \setminus \{1\} \}$, where $n \in \mathbb{N}$, $n \geq 2$.
 Define a relation  $\sim$ on $W_n$  by, $a \sim b \iff
    \exists\, i ,j \in [n], i \neq j$  such that $a_i = b_j$, $a_j = b_i$ and $a_k = b_k$ for all
    $k \neq i, j$. The cardinality of $W_n$ is easily seen to be $n!$.

    \begin{lem} \label{lemma8}
   The $n!$ elements of $W_n$, $ \alpha = \alpha_1, \alpha_2, \ldots,\alpha_{n!}$ can be ordered
    in such a way that $\alpha_i \sim \alpha_{i+1}$, for $1 \leq i \leq n!-1$.

    \end{lem}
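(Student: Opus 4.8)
The plan is to recognize that, under the relation $\sim$, the set $W_n$ is precisely the collection of bijective arrangements of the $n$ symbols $\{2,3,\ldots,n+1\}$ into the $n$ positions $1,\ldots,n$, where two arrangements are adjacent exactly when one is obtained from the other by interchanging the entries in two positions (a transposition). Thus the lemma asserts that this ``transposition graph'' on $n!$ vertices admits a Hamiltonian path. I would prove this by induction on $n$, but to make the induction close I would first \emph{strengthen} the statement to: for any set of $n$ distinct symbols and any prescribed arrangement $\beta$, the $n!$ arrangements can be listed, \emph{starting from} $\beta$, so that consecutive terms differ by a single transposition. Existence of some ordering, which is all the lemma needs, then follows by applying this to any fixed $\beta$.

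The base case $n=1$ is trivial, and $n=2$ is immediate: the two arrangements of $\{2,3\}$ differ by the single transposition of positions $1$ and $2$, and either may be taken first. For the inductive step, fix a distinguished symbol, say $\star = n+1$, and partition the $n!$ arrangements into $n$ blocks $B_1,\ldots,B_n$ according to the position $p \in [n]$ occupied by $\star$. Each block $B_p$ consists of the $(n-1)!$ arrangements of the remaining $n-1$ symbols in the $n-1$ positions $[n]\setminus\{p\}$, so by the strengthened inductive hypothesis the members of $B_p$ can be listed, beginning at any prescribed one, with consecutive terms differing by a transposition internal to $[n]\setminus\{p\}$.

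To assemble the blocks into a single Hamiltonian path I would traverse them in any order $p_0,p_1,\ldots,p_{n-1}$ of the $n$ positions, starting $B_{p_0}$ at the restriction of the prescribed $\beta$. Having exhausted $B_{p_i}$ at some arrangement $\gamma$, I interchange $\star$ with the symbol currently occupying position $p_{i+1}$; this is a single transposition, it moves $\star$ from $p_i$ to $p_{i+1}$, and it yields one specific arrangement of $B_{p_{i+1}}$, at which I begin the inductive listing of $B_{p_{i+1}}$. Since every pair of positions is joinable by a single transposition of $\star$, the order in which the blocks are visited is unconstrained. Applying the strengthened statement to the symbol set $\{2,\ldots,n+1\}$ then recovers the lemma.

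The only genuine obstacle is the bookkeeping at the block transitions: one must check simultaneously that each move of $\star$ is a legal single transposition and that the arrangement it lands on is an admissible starting point for the next block's inductive list. Strengthening the hypothesis to permit an arbitrary start is exactly what dissolves this difficulty, and it is the one nonobvious design choice in the argument. I would also remark that the resulting ordering is an instance of the classical plain-changes (Steinhaus--Johnson--Trotter) Gray code for permutations, whose consecutive adjacent transpositions are a special case of the relation $\sim$; invoking it would furnish a one-line alternative proof.
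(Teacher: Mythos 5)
Your proof is correct and follows essentially the same route as the paper's: induction on $n$, a partition of $W_n$ into $n$ blocks of size $(n-1)!$, Hamiltonian paths within each block supplied by the inductive hypothesis, and single transpositions gluing consecutive blocks. The only (immaterial) differences are that you partition by the position of the distinguished symbol $n+1$ where the paper partitions by the symbol occupying position $1$ (the dual decomposition), and that you make the ``start at any prescribed arrangement'' strengthening explicit in the induction hypothesis, whereas the paper achieves the same effect implicitly through the relabeling bijections $\phi_i$ that send the desired entry point of each block to the first term of the inherited ordering.
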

    \begin{proof} If $n = 2$, then $W_n$ = $\{23, 32\}$ and $23\sim 32$.
     Let us assume that $n \geq 3$. The proof is by induction on $n$.

Let $W_{n,i} =\{f \in W_n \ | \ a_1=i\}$. Clearly $W_n = \bigcup
\limits_{i=2}^{n+1} W_{n,i}$, where each $W_{n,i}$ is in bijective
correspondence with $W_{n-1}$. By the inductive hypothesis,  assume
that $W_{n-1}$ has the required ordering
$\alpha_{1}\sim\alpha_{2}\sim\dots\sim\alpha_{(n-1)!}$, where
$\alpha_{1} = a_1a_2\dots a_{n-1}$, $a_{i} \in \{2,3, \dots n\}$.
For a fixed first element $iw_{2}w_{3}\dots w_{n}\in W_{n,i}$, the
map $\phi_{i} : [n+1]\setminus \{1,i\} \rightarrow \{2,3,\dots n\}$ defined by
$\phi_{i} (w_{j})=a_{j-1}$ is bijective. Using the ordering in
$W_{n-1}$ and the map $\phi_{i}$, we get an ordering in $W_{n,i}$.
Beginning with $23\dots n+1=2w_2\dots w_n \in W_{n,2}$ and using the
map $\phi_{2}$ we order $W_{n,2}$. Let $2w_{2}'w_{3}'\dots w_{n}'$
be the last element of this ordering and $w_j'=3$. Then,
$2w_{2}'w_{3}'\dots w_{n} \sim 3w_{2}'\dots w_{j-1}'2w_{j+1}'w_n'$.
Using the map $\phi_{3}$ in the above method, we get an ordering for
$W_{n,3}$.
 Repeating this argument for $4\leq i\leq n+1$, we have the required ordering in $W_n$.
 \end{proof}

    Since every critical 1-cell contains $<2>$, henceforth a critical 1-cell $\{f, <2>\}$ shall be denoted
    by $f$.

 \begin{rmk} \label{remark2}
   There exists alternating paths from a critical 2-cell $\tau$ to $\alpha$ and $\beta$ if and only if $\alpha \sim \beta$.
    \end{rmk}

The set of critical 1-cells $C_1 = \{\alpha_i = \{f_i, <2>\} | f_i \in W_n\}$ is in bijective correspondence with $W_n$.
From Lemma \ref{lemma8},  we have an ordering $\alpha_1 \sim \alpha_2 \sim \ldots \sim \alpha_{n!}$ of the elements of $C_1$.  Let $C_2$ = $\{\tau_1, \tau_2, \ldots, \tau_{ \frac{n! (n-1) n} {2}}\}$ and
$A = [a_{ij}]$ be a matrix of order  $|C_1| \times |C_2|$, where
$a_{ij} = 1$, if there exists an alternating path from $\tau_j$ to $\alpha_i$ and $0$
if no such path exists.  Using Lemma \ref{lemma7}  each column of $A$ contains exactly two
 non zero elements which are 1. The rows of the matrix $A$ are
denoted by $R_{\alpha_i}$ and the columns are denoted by $C_{\tau_i}$.

\begin{lem} \label{lemma9}
 The set $B = \{R_{\alpha_2}, \ldots , R_{\alpha_{n!}}\}$ is a basis for the row space of $A$
 over the field $\mathbb{Z}_2$.
  \end{lem}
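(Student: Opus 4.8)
The plan is to read $A$ as the mod-$2$ vertex--edge incidence matrix of a graph on the vertex set $C_1$, and to exploit the Hamiltonian ordering supplied by Lemma \ref{lemma8}. By Lemma \ref{lemma7} every column $C_{\tau_j}$ has exactly two nonzero entries, both equal to $1$ (equivalently, each critical $2$-cell is incident to exactly two critical $1$-cells). Consequently every column sums to $0$ over $\mathbb{Z}_2$, so the sum of all rows vanishes: $\sum_{i=1}^{n!} R_{\alpha_i}=0$. This immediately gives $R_{\alpha_1}=\sum_{i=2}^{n!} R_{\alpha_i}$, so $R_{\alpha_1}$ lies in the span of $B$, and hence $B$ spans the whole row space of $A$. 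It therefore remains only to prove that the $n!-1$ vectors in $B$ are linearly independent over $\mathbb{Z}_2$.

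For independence I would use the ordering $\alpha_1\sim\alpha_2\sim\cdots\sim\alpha_{n!}$ of $C_1$ from Lemma \ref{lemma8}. For each $1\le i\le n!-1$ we have $\alpha_i\sim\alpha_{i+1}$, so by Remark \ref{remark2} there is a critical $2$-cell $\tau_{(i)}\in C_2$ admitting alternating paths to both $\alpha_i$ and $\alpha_{i+1}$; by Lemma \ref{lemma7} these are the \emph{only} two critical $1$-cells reached from $\tau_{(i)}$. Hence the column $C_{\tau_{(i)}}$ has its two $1$'s in exactly the rows indexed by $\alpha_i$ and $\alpha_{i+1}$. Restricting attention to these $n!-1$ distinguished columns gives a structure on which a triangular elimination can be run against the rows of $B$.

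Concretely, suppose $\sum_{i\in T} R_{\alpha_i}=0$ for some $T\subseteq\{2,\dots,n!\}$, and evaluate this relation coordinate by coordinate on the columns $C_{\tau_{(1)}},C_{\tau_{(2)}},\dots$ in order. Among the rows $R_{\alpha_2},\dots,R_{\alpha_{n!}}$, only $R_{\alpha_2}$ is nonzero in the coordinate $\tau_{(1)}$ (its partner $\alpha_1$ is excluded from $B$), so the relation forces $2\notin T$. Inductively, having shown $2,\dots,i\notin T$, the coordinate $\tau_{(i)}$ is supported on $R_{\alpha_i}$ and $R_{\alpha_{i+1}}$, and since $i\notin T$ this forces $i+1\notin T$. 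Running $i$ up to $n!-1$ yields $T=\emptyset$, so $B$ is linearly independent; together with the spanning property above, $B$ is a basis of the row space (and incidentally $\operatorname{rank}_{\mathbb{Z}_2}A=n!-1$).

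The substantive input is the combinatorics already packaged in the earlier results: Lemma \ref{lemma7} pins down that each column is a single ``edge'' with exactly two $1$'s, Remark \ref{remark2} identifies those edges with the transposition relation $\sim$, and Lemma \ref{lemma8} guarantees a Hamiltonian $\sim$-path through $C_1$. The one point deserving care---and the main thing to get right---is that the path edges $\tau_{(i)}$ really contribute columns whose support is \emph{exactly} $\{\alpha_i,\alpha_{i+1}\}$ with no third incidence; this is precisely what makes the elimination triangular, and it is here that the ``exactly two'' in Lemma \ref{lemma7}, rather than a mere existence statement, is essential. Everything after that is routine linear algebra over $\mathbb{Z}_2$.
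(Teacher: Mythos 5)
Your proposal is correct and follows essentially the same route as the paper: the column-sum-zero observation (each column has exactly two $1$'s by Lemma \ref{lemma7}) handles the spanning/rank bound, and the Hamiltonian ordering of Lemma \ref{lemma8} together with Remark \ref{remark2} supplies the distinguished columns $C_{\tau_{(i)}}$ on which the coordinate-by-coordinate elimination forces all coefficients to vanish. The paper's proof is exactly this triangular induction ($a_2 = a_2+a_3 = \cdots = 0$), so there is nothing to add.
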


  \begin{proof} In each column exactly two entries are 1 and all other entries are $0$
  and thus column  sum is zero (mod $2$) and hence rank($A$) $< n!$.

 Assume $\sum\limits_{i=2}^{n!} a_i R_{\alpha_i}= 0$, $a_i \in \{0,1\}$.
  For $1 \leq i \leq n!-1$, let $\tau_{i}$ be the critical 2-cell which has alternating paths
  to $\alpha_i$ and $\alpha_{i+1}$. The column
  $C_{\tau_i}$ has the $i$ and $(i+1)^{th}$ entry equal to 1 and all other
  entries equal to zero.  $\sum\limits_{i=2}^{n!} a_i R_{\alpha_i}= 0$, implies
  $a_2= a_2+a_3= a_3+ a_4= \ldots= a_{(n-1)!}+ a_{n!} = 0$.
  Hence $a_2 = a_3= \ldots a_{n!} = 0$ and $B$ is a basis for the row space of $A$.
  \end{proof}

  Let the Discrete Morse Complex corresponding to the acyclic matching $\mu$ on $\Delta$ be
  $\M = (\M_n , \partial_n)$, $n \geq 0$ where $\M_i$ denotes the free abelian groups over $\mathbb{Z}_2$ generated
  by the critical $i$-cells. The only non trivial groups are $\M_0, \M_1, \M_2, \M_{n-1}$.
For any two critical cells $\tau$ and $\sigma$ such that dim($\tau$) = dim($\sigma)$ +1, the incidence number
$[\tau: \sigma]$ is either 0 or 1.

We have developed all the necessary tools to prove the main results.

 \vspace{.2in}

\noindent {\bf Proof of Theorem \ref{thm1}.}

\begin{proof}
The graph $K_{n+1}^{K_n}$ folds to graph $G$, by Lemma \ref{lemma2}
and therefore Hom $(K_2, K_{n+1}^{K_n}) \simeq$ Hom $(K_2,G)$.
Further since Hom $(K_2, G) \simeq \mathcal{N}(G)$ and
$\mathcal{N}(G) \simeq \Delta$, from Proposition \ref{prop6}, it is
sufficient to compute the homology groups of the Morse Complex $\M$.

For all $y \neq z \in [n]$, $\{<y>, <z>\} \in \mathcal{N}(G)$, thereby showing that $\{<y>, <z>\}$ is an edge
in $G$. If $f \in V(G)$ such that $|\text{Im}\,f| = n$, then $\{f, <x>\}$ is an edge, where $x \notin \text{Im}\,f$.
Since $\{<x>, <y>\}$ is an edge for all $x, y \in [n+1]$, any two vertices of $G$ are connected by an edge path and therefore $\mathcal{N}(G)$ is connected  which implies
$H_0(\mathcal{N}(G); \mathbb{Z}_2) \cong \mathbb{Z}_2$.

Since $H_0(\mathcal{N}(G); \mathbb{Z}_2) \cong \mathbb{Z}_2
 \cong (\M_0, \mathbb{Z}_2)$, $\text{Ker}\, \partial_1 \cong \mathbb{Z}_2^{n!}$, where $\partial _1: \M_1 \rightarrow \M_0$ is a boundary map.

 Since $n \geq 4$, any critical 2-cell belongs to
 $C_2$. Further since any critical 2-cell is connected by alternating paths
 to exactly two 1-cells, from Lemma \ref{lemma9}, the rank of the group homomorphism
 $\partial_2 :\mathbb{Z}_2^p \longrightarrow \mathbb{Z}_2^{n!}$ is
 $n! -1$. Therefore $H_1(\M; \mathbb{Z}_2) \cong \mathbb{Z}_2$.

 If $n = 4$, then $\tau$ = $\{<2>, <3>, <4>, <5>\}$ is the only critical 3-cell and
 $\M_3 \cong \mathbb{Z}_2$. Since each facet of $\tau$ belongs to $S_1$, there will be no alternating path from $\tau$
 to any critical 2-cell which implies that the incidence number $[\tau: \alpha ] = 0$, for any critical
2-cell $\alpha$ and $\partial_3: \M_{3} \rightarrow \M_{2}$ is the zero map.  Rank($\partial_2$) = $n!-1$ and therefore $\text{Ker}\,\partial_2 \cong \mathbb{Z}_2^{p- n! +1}$. Thus $H_2(\M, \mathbb{Z}_2) \cong \mathbb{Z}_2^{p-n!+1}$.

If $n > 4$,  $\tau = \{<2>, <3>, \ldots, <n+1>\}$ is the only $n-1$ critical cell. $\M_{n-2}$ and $\M_{n}$ are trivial groups and therefore $H_{n-1}(\M, \mathbb{Z}_2) \cong \mathbb{Z}_2$.
\end{proof}
\begin{cor} \label{cor}\hspace{15 cm}
\begin{center}
            $H_k(\mathcal{N}(K_4^{K_3}); \mathbb{Z}_2) = \begin{cases}
            \mathbb{Z}_2 , & \text{if \hspace{0.3 cm}$k = 0, 1$}.\\
            \mathbb{Z}_2^{14}, & \text{if \hspace{0.3 cm} $k = 2$}\\
                             0, & \text{otherwise}.\\
                       \end{cases}$
  \end{center}

\end{cor}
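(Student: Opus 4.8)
The plan is to re-run the Discrete Morse computation of the preceding proof in the single outstanding case $n=3$, where Theorem \ref{thm1} is not applicable. Setting $n=3$ gives $p=\frac{n!(n-1)n}{2}=18$, and the list of critical cells produced by the acyclic matching $\mu$ is unchanged: one $0$-cell $\{<1>\}$, the $n!=6$ critical $1$-cells of $C_1$, the $p=18$ critical $2$-cells of $C_2$, and the single cell of $C_3$. The crucial difference from the regime $n\geq 4$ is purely dimensional. The lone $C_3$ cell $\{<2>,<3>,<4>\}$ has dimension $n-1=2$, so instead of sitting in a higher degree it joins $C_2$ as an extra $2$-cell. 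Consequently $\M_0\cong\Z_2$, $\M_1\cong\Z_2^{6}$, $\M_2\cong\Z_2^{18+1}=\Z_2^{19}$, and $\M_k=0$ for all $k\geq 3$; in particular $\partial_3=0$.

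Degrees $0$ and $1$ I would handle exactly as in the proof of Theorem \ref{thm1}. The argument that any two vertices of $G$ are joined by an edge path is valid for $n=3$, so $\N(G)$ is connected and $H_0\cong\Z_2$. This forces $\partial_1=0$, whence $\ker\partial_1\cong\Z_2^{6}$. For $\partial_2$ I would first invoke Lemma \ref{lemma9}: since, by Lemma \ref{lemma7}, every cell of $C_2$ is connected by alternating paths to exactly two critical $1$-cells, the restriction of $\partial_2$ to $\Z_2^{C_2}$ has rank $n!-1=5$. It then remains to check that the extra generator contributes nothing, and here the point is that every facet of the $C_3$ cell $\{<2>,<3>,<4>\}$ is of the form $\{<y_i>,<y_j>\}$ and hence lies in $S_1$; so there is no alternating path from it to any critical $1$-cell and its image under $\partial_2$ is $0$. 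Therefore $\operatorname{rank}\partial_2=5$, and $H_1\cong\ker\partial_1/\operatorname{im}\partial_2\cong\Z_2^{6}/\Z_2^{5}\cong\Z_2$.

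Finally, because there are no critical cells in dimension $3$ or higher, $\partial_3=0$ and $H_2\cong\ker\partial_2$. Counting dimensions, $\ker\partial_2$ has dimension $\dim\M_2-\operatorname{rank}\partial_2=19-5=14$, giving $H_2\cong\Z_2^{14}$, while $H_k=0$ for $k\geq 3$. The one genuinely new point, and the only place where care is needed relative to Theorem \ref{thm1}, is the bookkeeping forced by the $C_3$ cell dropping into degree $2$: it adds one to $\dim\M_2$ while itself lying in $\ker\partial_2$, so the rank of $H_2$ becomes $(p-n!+1)+1=14$ rather than the generic value $p-n!+1=13$. I do not expect any serious obstacle here, since every ingredient (connectivity, Lemmas \ref{lemma7} and \ref{lemma9}, the computation of the critical cells) is already established for $n\geq 3$; the corollary is essentially the $n=3$ specialization of the main argument with this dimension shift accounted for.
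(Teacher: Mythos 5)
Your proposal is correct and follows essentially the same route as the paper's own proof: connectivity gives $H_0\cong\mathbb{Z}_2$ and $\partial_1=0$, Lemma \ref{lemma9} gives $\operatorname{rank}\partial_2=5$ on the $C_2$ generators, and the $C_3$ cell $\{<2>,<3>,<4>\}$ (now of dimension $2$) lies in $\ker\partial_2$ because all its facets belong to $S_1$, yielding $H_1\cong\mathbb{Z}_2$ and $H_2\cong\mathbb{Z}_2^{19-5}=\mathbb{Z}_2^{14}$. No discrepancies with the paper's argument.
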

\begin{proof} Since $n= 3$ in this case, $|C_1| = 6$, $|C_2|$ = $18$ and $C_3 = \{\{<2>, <3>, <4>\}\}$.
There exist $19$ critical
2-cells and therefore $\M_2$ $\cong \mathbb{Z}_2^{19}$. Since $\mathcal{N}(K_4^{K_3})$ is path connected,
$H_0(\mathcal{N}(K_4^{K_3}); \mathbb{Z}_2) \cong \mathbb{Z}_2$.

Each facet of $\tau = \{<2>, <3>, <4>\}$ belongs to $S_1$ and
therefore there exists no path from $\tau$ to any critical 1-cell
$\alpha$ and therefore the incidence number $[\tau : \alpha] = 0$
for any critical 1-cell $\alpha$. Hence $\partial_2(\tau) = 0$ i.e.
$\tau \in$ Ker $\partial_2$. From Lemma \ref{lemma9}, rank
($\partial_2$) = $5$. Since $H_0(\mathcal{N}(K_4^{K_3});
\mathbb{Z}_2) \cong \mathbb{Z}_2$ = $\M_0$, $\partial_1 = 0$.
Therefore $H_1(\mathcal{N}(K_4^{K_3}); \mathbb{Z}_2) \cong
\mathbb{Z}_2$.

The rank of $\partial_2 = 5$ shows that Ker $\partial_2$ $\cong$
$\mathbb{Z}_2^{14}$. Further there is no critical cell of dimension
greater than $2$, $\M_i = 0$, for all $i > 2$. Hence
$H_2(\mathcal{N}(K_4^{K_3}); \mathbb{Z}_2) \cong \mathbb{Z}_2^{14}$
and $H_k(\mathcal{N}(K_4^{K_3}); \mathbb{Z}_2) = 0$, for all $k >
2$.
\end{proof}

We recall the following result to prove Theorem \ref{thm2}.

\begin{prop} \label{prop7}(Theorem 3A.3, \cite{h}) \\
 If $C$ is a chain complex of free abelian groups, then there exist short exact sequences
 \begin{center}
  $0 \longrightarrow H_n(C; \mathbb{Z}) \otimes \mathbb{Z}_2 \longrightarrow H_n(C; \mathbb{Z}_2) \longrightarrow$
  Tor$(H_{n-1}(C; \mathbb{Z}), \mathbb{Z}_2) \longrightarrow 0$
 \end{center}
 for all n and these sequences split.

\end{prop}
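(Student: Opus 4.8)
The statement is the Universal Coefficient Theorem for homology specialized to $\mathbb{Z}_2$ coefficients, so the plan is to run the standard homological algebra argument built on a degreewise splitting. First I would record the structural input. For each $n$ the boundary map $\partial_n \colon C_n \to C_{n-1}$ has image $B_{n-1}$, a subgroup of the free abelian group $C_{n-1}$, hence itself free abelian. Writing $Z_n = \ker \partial_n$, the short exact sequence $0 \to Z_n \to C_n \xrightarrow{\partial_n} B_{n-1} \to 0$ therefore splits, because $B_{n-1}$ is free and thus projective; fix a retraction $p_n \colon C_n \to Z_n$. This single splitting is the source both of the exactness I need after tensoring and of the final splitting of the short exact sequence.

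Next I would package this into a short exact sequence of chain complexes. Regard $Z = \{Z_n\}$ and $B' = \{B'_n := B_{n-1}\}$ as complexes with zero differential. There is a short exact sequence of complexes $0 \to Z \to C \xrightarrow{\partial} B' \to 0$, with the first map the inclusion and the second $\partial_n$ viewed as the surjection onto $B_{n-1}=B'_n$. Since this sequence is split in each degree, tensoring with $\mathbb{Z}_2$ keeps it exact, yielding $0 \to Z \otimes \mathbb{Z}_2 \to C \otimes \mathbb{Z}_2 \to B' \otimes \mathbb{Z}_2 \to 0$. As $Z$ and $B'$ carry zero differential, so do $Z \otimes \mathbb{Z}_2$ and $B' \otimes \mathbb{Z}_2$, whence their degree-$n$ homology is $Z_n \otimes \mathbb{Z}_2$ and $B_{n-1} \otimes \mathbb{Z}_2$ respectively.

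The heart of the proof is the associated long exact sequence and the identification of its connecting homomorphism. Near degree $n$ it reads $B_n \otimes \mathbb{Z}_2 \xrightarrow{\delta_n} Z_n \otimes \mathbb{Z}_2 \to H_n(C; \mathbb{Z}_2) \to B_{n-1} \otimes \mathbb{Z}_2 \xrightarrow{\delta_{n-1}} Z_{n-1} \otimes \mathbb{Z}_2$. I would verify, by the snake-lemma diagram chase (lift along the split surjection, apply $\partial$, project back), that $\delta_n$ is exactly the map induced by the inclusion $B_n \hookrightarrow Z_n$. Granting this, the long exact sequence breaks into $0 \to \operatorname{coker}\delta_n \to H_n(C; \mathbb{Z}_2) \to \ker \delta_{n-1} \to 0$. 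Right-exactness of $-\otimes \mathbb{Z}_2$ applied to $0 \to B_n \to Z_n \to H_n(C) \to 0$ identifies $\operatorname{coker}\delta_n = H_n(C) \otimes \mathbb{Z}_2$. Since $0 \to B_{n-1} \to Z_{n-1} \to H_{n-1}(C) \to 0$ is a free resolution of $H_{n-1}(C)$, tensoring with $\mathbb{Z}_2$ and taking degree-one homology computes $\operatorname{Tor}(H_{n-1}(C), \mathbb{Z}_2)$, so $\ker \delta_{n-1} = \operatorname{Tor}(H_{n-1}(C), \mathbb{Z}_2)$. This produces the asserted short exact sequence for every $n$.

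Finally, for the splitting I would use the retraction $p_n$ from the first step. Tensoring gives $p_n \otimes 1 \colon C_n \otimes \mathbb{Z}_2 \to Z_n \otimes \mathbb{Z}_2$, and post-composing with the projection $Z_n \otimes \mathbb{Z}_2 \to H_n(C) \otimes \mathbb{Z}_2$ yields a map $C_n \otimes \mathbb{Z}_2 \to H_n(C) \otimes \mathbb{Z}_2$. Because $p_n$ restricts to the identity on $Z_n$, I would check that this map kills $(\partial \otimes 1)$-boundaries and sends a homology class to the expected element, so it descends to a retraction $H_n(C; \mathbb{Z}_2) \to H_n(C) \otimes \mathbb{Z}_2$ of the injection in the sequence, proving that it splits. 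The main obstacle is the identification of $\delta_n$ with the inclusion-induced map: this is the one place requiring an honest chase through the connecting homomorphism rather than formal manipulation, and it is precisely this identification that makes the $\operatorname{Tor}$ term emerge as the kernel.
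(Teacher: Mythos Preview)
Your outline is the standard proof of the Universal Coefficient Theorem and is correct. Note, however, that the paper does not give its own proof of this proposition: it is quoted verbatim as Theorem~3A.3 from Hatcher's \emph{Algebraic Topology} and used as a black box in the proof of Theorem~\ref{thm2}. Your argument is essentially the one found in that reference, so there is nothing to compare.
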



\noindent {\bf  Proof of Theorem \ref{thm2}.}
\begin{proof}
 Since $\mathcal{N}(K_{n+1}^{K_n})$ is path connected, we only need to show that $\pi_1(\mathcal{N}$ $(K_{n+1}^{K_n}))
 \neq 0$.
 If $n = 2$, then $\mathcal{N}(K_{3}^{K_2}) \simeq$ Hom $(K_2 \times K_2, K_3) \simeq$ Hom $(K_2 \sqcup K_2, K_3)
 \simeq$ Hom $(K_2, K_3)$ $\times$ Hom $(K_2, K_3) \simeq S^1 \times S^1$.
 Hence $\pi_1(\mathcal{N}(K_3^{K_2}) \cong \mathbb{Z} \times \mathbb{Z}$.

 Now let us assume $n \geq 3$. Since $H_1(\mathcal{N}(K_{n+1}^{K_n}); \mathbb{Z})$ is abelianization of
 $\pi_1(\mathcal{N}(K_{n+1}^{K_n}))$,
 it is enough to show that $H_1(\mathcal{N}(K_{n+1}^{K_n}); \mathbb{Z})$ $\neq 0$.
 From Proposition \ref{prop7},
 $H_1(\mathcal{N}(K_{n+1}^{K_n}); \mathbb{Z}_2) \cong H_1(\mathcal{N}(K_{n+1}^{K_n});$ $\mathbb{Z}) \otimes
 \mathbb{Z}_2$ $\oplus$ Tor$(H_0(\mathcal{N}(K_{n+1}^{K_n});$ $\mathbb{Z}_2)$. Since
 $H_0(\mathcal{N}(K_{n+1}^{K_n}); \mathbb{Z}) \cong \mathbb{Z}$, Tor$(H_0(\mathcal{N}(K_{n+1}^{K_n}); \mathbb{Z}_2) = 0$.
 So $H_1(\mathcal{N}$ $(K_{n+1}^{K_n}); \mathbb{Z}) = 0$, implies that $H_1(\mathcal{N}(K_{n+1}^{K_n}); \mathbb{Z}_2) = 0$,
 which is a contradiction to Theorem \ref{thm1} and Corollary \ref{cor}.
 Therefore $H_1(\mathcal{N}(K_{n+1}^{K_n}); \mathbb{Z})$ $\neq 0$.
 \end{proof}

 The maximum degree $d$ of the graph $K_2 \times K_{n}$ is $n-1$ and
 Hom $(K_2 \times K_n, K_{n+1}) \simeq \mathcal{N}(K_{n+1}^{K_n})$. Hence Hom $(K_2 \times K_{n}, K_{n+1})$ is exactly
 $(n+1 - d -2)$-connected.

\vspace{.1in}

\noindent {\bf  Proof of Corollary \ref{cor1}.}
\begin{proof}
Theorem \ref{thm2} gives the result for the case $m = n+1$. If $m = n$, then for any $f \in V(K_n^{K_n})$
with $\text{Im}\,f = [n]$, $N(f) = \{f\}$. Since $n \geq 2$, $\mathcal{N}(K_n^{K_n})$ is disconnected.

If $m < n$, Lemma \ref{lemma2} shows that $K_m^{K_n}$  can be folded to the graph $G$, where
$V(G) = \{<x> \ | x \in [m]\}$. Then $N(<x>) = \{<y> \ | \ y \in [m] \setminus \{x\} \}$, for all $<x> \ \in V(G)$
and therefore $\mathcal{N}(G)$ is homotopic to the simplicial boundary of $(m-1)$-simplex.
Hence $\mathcal{N}(K_m^{K_n}) \simeq \mathcal{N}(G) \simeq S^{m-2}$.
Therefore $\text{conn(Hom}(K_m^{K_n})) = m-3$.

\end{proof}



\bibliographystyle{plain}

\end{document}